\title{
A unified approach to reverse engineering and data selection for unique network identification 

\thanks{Submitted to the editors DATE.
\funding{AV-C  was  partially  supported  by  the  Simons  Foundation  (grant  516088). ESD and VN-S were partially supported by the Research, Scholarly \& Creative Activities Program awarded by the Cal Poly Division of Research, Economic Development \& Graduate Education.}}
}
\author{Alan Veliz-Cuba\thanks{Mathematics Department, University of Dayton, Dayton, OH (\email{avelizcuba1@udayton.edu}).}
\and Vanessa Newsome-Slade\thanks{Mathematics Department, California Polytechnic State University (Cal Poly), San Luis Obispo, CA (\email{vnewsome@calpoly.edu}, \email{edimitro@calpoly.edu}).}
\and Elena S. Dimitrova\footnotemark[3]}
\crefname{hypothesis}{Hypothesis}{Hypotheses}
\def\F{\mathbb{F}}
\def\M{\mathcal{M}}
\def\D{\mathcal{D}}
\newcommand{\<}{\langle}
\renewcommand{\>}{\rangle}
\newcounter{comments}
\definecolor{Red}{rgb}{0.8,0,0}
\definecolor{Green}{rgb}{0,0.7,0}
\definecolor{Blue}{rgb}{.2,.2,1}
\begin{document}

\maketitle

\begin{abstract}
Due to cost concerns, it is optimal to gain insight into the connectivity of biological and other networks using as few experiments as possible. Data selection for unique network connectivity  identification has been an open problem since the introduction of algebraic methods for reverse engineering for almost two decades. In this manuscript we determine what data sets uniquely identify the unsigned wiring diagram corresponding to a system that is discrete in time and space. Furthermore, we answer the question of uniqueness for signed wiring diagrams for Boolean networks. 
Computationally, unsigned and signed wiring diagrams have been studied separately, and in this manuscript we also show that there exists an ideal capable of encoding both unsigned and signed information. This provides a unified approach to studying reverse engineering that also gives significant computational benefits.

\end{abstract}

\begin{keywords}
Wiring diagram, squarefree monomial ideal, abstract simplicial complex, reverse engineering, data selection, polynomial dynamical system
\end{keywords}

\begin{MSCcodes}
37N25, 94C10, 70G55
\end{MSCcodes}

\section{Introduction}

Biological systems are commonly represented using networks which provide information about interactions between various elements in the system. Knowledge of the network connectivity is crucial for studying network robustness, regulation, and control strategies in order to develop, for example, therapeutic interventions~\cite{tan2013, Wang:2013aa} and drug delivery strategies~\cite{yousefi2012,Lee:2012aa}, or to understand the mechanisms for the spread of an infectious disease~\cite{Madrahimov:2013dq,PMID:20478257}. Moreover, it has been demonstrated that the role of network connectivity goes beyond static properties and can in fact dictate certain dynamical properties and be used for their control~\cite{jarrah2010dynamics,campbell,veliz2011reduction,zamal,wu,albert,sontag2008effect,murrugarra15, murrugarra19}. 

Reverse engineering is one approach by which network connectivity can be reconstructed by viewing the system in question as a black box and only considering the available experimental data in order to gain insight into the system's inner connections. These connections are encoded using \emph{wiring diagrams}, which are directed graphs that describe the relationships between elements and how elements in a network affect one another. A \emph{signed wiring diagram} provides additional information about the interactions between elements; for example, in the context of a gene regulatory network, a signed wiring diagram reflects whether a gene acts as an activator or an inhibitor to another gene.

Due to cost concerns related to conducting numerous experiments, it is of interest to determine the simplest possible model that fits the data. 
 One approach consists of finding all functions that fit the data and selecting the simplest under some criteria \cite{laubenbacher04}. However, this can result in several possible candidates, especially if the data set is small. Another approach is to use wiring diagrams, where each of them can be seen as an equivalent class of all functions that depend on a given set of variables \cite{Jarrah_reveng,Veliz_reveng}. 
 In this case, we want to find the smallest sets of variables required to fit the data. To accomplish this, we consider \emph{minimal} wiring diagrams. Given a set of inputs and outputs, algorithms in \cite{Jarrah_reveng} and \cite{Veliz_reveng} compute all possible minimal wiring diagrams using the primary decomposition of ideals. Furthermore, we wish to determine data sets that \emph{uniquely} determines the smallest set of variables required to fit the data, which is the focus of this manuscript.

\subsection{Motivation}
Stanley-Reisner theory provides a bijective correspondence between squarefree monomial ideals and abstract simplicial complexes. Since the algorithm introduced in~\cite{Jarrah_reveng} for compute unsigned minimal wiring diagrams relies on squarefree monomial ideals, we are able to utilize this correspondence in order to determine which sets of inputs always correspond to a unique unsigned minimal wiring diagram regardless of the output assignment. However, there is not an established connection between abstract simplicial complexes and signed minimal wiring diagrams. Such a connection would be beneficial for interpreting these ideals as squarefree monomial ideals that retain the information about the signs of interactions so that the correspondence provided by Stanley-Reisner theory can still be used.

We use the information gleaned from the unsigned case to determine the conditions under which input sets have a unique signed minimal wiring diagram regardless of output assignment. Having a unique minimal wiring diagram provides us with one set of variables required to be consistent with the given data. In reverse engineering a biological system, it is desirable to find the network with the least number of experiments possible. Knowing that a set of inputs will provide one minimal wiring diagram without knowing the outcome of the experiment given by a set of outputs is beneficial in gaining information about a biological network, while minimizing the number of experiments to be conducted.

Theorem \ref{thm:unified} provides an approach to study all min-sets (unsigned and signed) with a single mathematical object. 
Theorem \ref{thm:uniqueness} describes sufficient conditions of uniqueness from the structure of the possible set of generators of the ideal that encodes the data.
Theorems \ref{thm:uniqueness_unsigned_Boolean} and \ref{thm:uniqueness_signed_Boolean} give necessary and sufficient conditions for uniqueness of wiring diagrams of Boolean networks using the structure of the inputs in the hypercube. Furthermore, Theorem~\ref{thm:uniqueness_unsigned_Boolean} is valid for any number of states.

\section{Background}

\subsection{Unsigned and signed wiring diagrams}
\begin{definition}[Polynomial dynamical system]
	A \textit{polynomial dynamical system} (PDS) over a finite field $\F$ is a function $$f=(f_1,\ldots,f_n):\F^n \rightarrow \F^n$$ with coordinate functions $f_i \in \F[x_1, \ldots, x_n]$.
\end{definition}
Throughout this discussion, we are reverse-engineering polynomial dynamical systems node by node, focusing on one component function at a time. 

We are interested in functions that represent biological regulation which are functions whose variables only affect the function either positively or negatively, in the sense that increasing the value of $x_i$ increases (respectively, decreases) the output of $f$. Functions with these properties are called \emph{unate} functions (also known as \emph{monotone} functions), formally defined  as $f:\F^n\to \F$ such that for all $i=1,\ldots, n$, $f$ does no depend on $x_i$, $f$ depends positively on $x_i$, or $f$ depends negatively on $x_i$.

It is standard to define the \emph{support} of any function $f$, supp$(f)$, as the collection of variables that appear in $f$. In the context of unate function we will distinguish between variables that affect the function ``positively'' and those that affect it ``negatively,'' motivating the following definitions. 

\begin{definition}[Support of a unate function] The \emph{positive support} of a unate function $f$ is defined as 
$
\textrm{supp}^+(f)=\{x_i | x_i \textit{~is~an~activator~of~} f \}
$ and 
the \emph{negative support} of a unate function $f$ is 
$
\textrm{supp}^-(f)=\{x_i | x_i \textit{~is~an~inhibitor~of~} f \}.
$
To encode the support of a unate function as a single set while still keeping the information of the signs of the variables, we use $\textrm{supp}^{sgn}(f)=\{x_i|x_i\in \textrm{supp}^+(f)\}\cup \{\overline{x_i}|x_i\in \textrm{supp}^-(f)\}$ which we call the \emph{signed} support of $f$ ($\overline{ x_i }$ is used to denote the fact that $x_i$ is an inhibitor).
\end{definition}

Note that the support and signed support of a constant function is the empty set.

\begin{example}
    Consider $f:\{0,1\}^3\rightarrow \{0,1\}$ given by $f(x_1,x_2,x_3)=x_1 \wedge \overline{x_3}$ and $g:\{0,1,2\}^4\rightarrow \{0,1,2\}$ given by $g(x_1,x_2,x_3,x_4)=\max(\min(x_1,2-x_2),x_4)$. The signed supports are  $supp^{sgn}(f)=\{x_1,\overline{x_3}\}$ and $supp^{sgn}(g)=\{x_1,\overline{x_2},x_4\}$.  
\end{example}

A \emph{wiring diagram} of a PDS $F=(f_1,\ldots,f_n)$ is a directed graph, where the vertices are labeled as the $n$ variables, and there is a directed edge $x_i\rightarrow x_j$ iff $x_i\in supp(f_j)$. On the other hand, the
\emph{signed wiring diagram} is a directed graph, where edges can stand for activation or inhibition (but not both). We draw an arrow from $x_i$ to $x_j$ if and only if $x_i\in \textrm{supp}^+(f_j)$ and we draw a blunt edge from $x_i$ to $x_j$ if and only if $x_i\in \textrm{supp}^-(f_j)$. 

\begin{example}\label{eg:BN}
    Consider $F:\{0,1\}^3\rightarrow \{0,1\}^3$ given by $F(x)=(x_2, x_1 \wedge \overline{x_3}, x_1 \vee x_3)$. Its wiring diagram is shown in Fig.~\ref{fig:wd}a. Since knowing the supports of each function in a network is sufficient to reconstruct the wiring diagram (Fig.~\ref{fig:wd}b), we will focus on reverse-engineering the ``local'' wiring diagram of functions from partial information. Note that a local wiring diagram can be seen as a graphical representation of the support of a function. 
\end{example}

\begin{figure}[ht]\label{fig:wd}
\centering
    \includegraphics[width=0.8\textwidth]{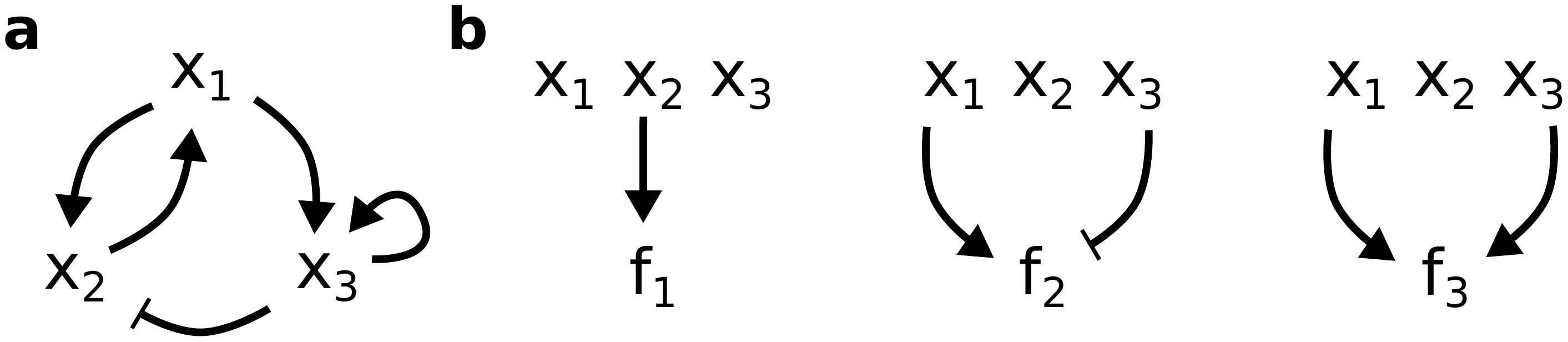}
    \caption{Wiring diagram. (a) Wiring diagram for Example \ref{eg:BN}. (b) ``Local'' wiring diagrams for each of $f_1$, $f_2$, $f_3$. These wiring diagrams are graphical representations of the supports. Note that these three wiring diagrams are sufficient to reconstruct the wiring diagram in (a). Also, these local wiring diagrams are graphical representations of the supports $supp^{sgn}(f_1)=\{x_2\}$, $supp^{sgn}(f_2)=\{ x_1,\overline{x_3}\}$, $supp^{sgn}(f_2)=\{ x_1,x_3\}$.
}
\end{figure}
Given partial information about a function, we form the (input-output) data set $\D = \{(s_1, t_1), \ldots, (s_m,t_m)\}$ where each $s_i \in \mathbb{F}^n$ and $t_i \in \mathbb{F}$.

\begin{definition}[Model space]
    The \emph{model space} of $\D$, denoted Mod$(\D)$, is the set of functions that fit the data; that is, $$\textrm{Mod}(\D) = \{f: \mathbb{F}^n \rightarrow \mathbb{F} ~|~ f(s_i)=t_i \textrm{ for all } i=1,\ldots,m\}.$$
	The \emph{signed model space} of $\D$, denoted Mod$^{sgn}(\D)$, is the set of unate functions that fit the data, i.e.
 $$ \textrm{Mod}^{sgn}(\D) = \{f: \mathbb{F}^n \rightarrow \mathbb{F} ~|~ f \textrm{~is~unate~and~} f(s_i)=t_i \textrm{ for all } i=1,\ldots,m\}.$$ 
\end{definition}

Note that for an arbitrary data set $\mathcal{D}$, the signed model space $ \textrm{Mod}^{sgn}(\D)$ may be empty. However, if we know that the data comes from a unate function, then $ \textrm{Mod}^{sgn}(\D)$ is guaranteed to have at least one element.

\begin{example}\label{ex1}
Suppose we have a data set for a Boolean function $f$ with input of the form $(x_1,x_2,x_3)$ such that
\begin{table}[H]
\centering
    \begin{tabular}{|c||c|c|c|}
    \hline
    input & $(1,1,1)$ & $(0,0,0)$ & $(1,1,0)$ \\
    \hline
    output & 0 & 0 & 1\\
    \hline
\end{tabular}
\end{table}

Since five outputs are unknown, there are $2^5$ Boolean functions that fit the data. That is, $Mod( \mathcal{D})$ has 32 elements.  
On the other hand, there is no general formula for the size of the signed model space. By exhaustive enumeration, it can be shown that $Mod^{sgn}( \mathcal{D})$ has four elements. Namely, 
$Mod^{sgn}( \mathcal{D})=\{
x_1 \wedge \overline{x_3},
x_2 \wedge \overline{x_3},
x_1 \wedge x_2 \wedge \overline{x_3},
(x_1 \vee x_2) \wedge \overline{x_3}
\}$. The wiring diagrams of the functions in $Mod^{sgn}( \mathcal{D})$ are shown in  Fig.~\ref{wds} (the last one is the wiring diagram of two unate functions). Note that (a) and (b) are minimal elements of $Mod^{sgn}( \mathcal{D})$ (with respect to inclusion), while (c) is not. These are graphical representations of the supports of the unate functions in $Mod^{sgn}( \mathcal{D})$: $\{x_1,\overline{x_3}\}$, $\{x_2,\overline{x_3}\}$, and $\{x_1,x_2,\overline{x_3}\}$.

\begin{figure}[ht]\label{wds}
\centering
\includegraphics[width=0.5\textwidth]{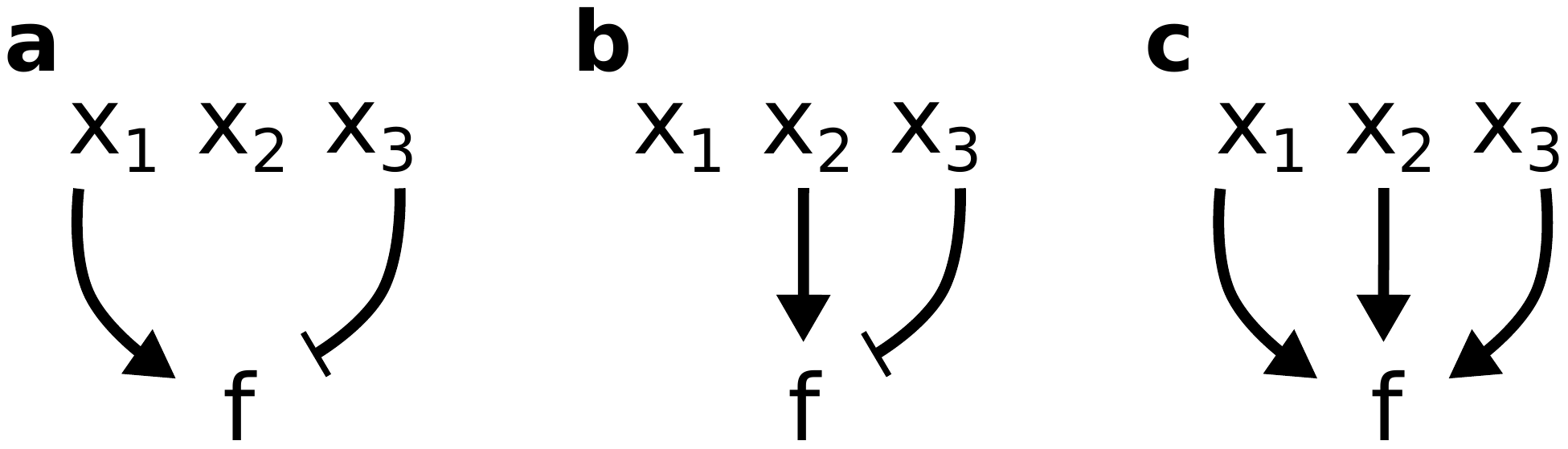}
    \caption{Possible (local) wiring diagrams for the function $f$ in Example \ref{ex1}. 
    }
\end{figure}

The unsigned case provides information about which variables affect $x_j$; activators and inhibitors are not specified. In this example the unsigned sets are $\{x_1, x_3\},$ $\{x_2, x_3\}$, and $\{x_1, x_2, x_3\}.$ Then, the minimal wiring diagrams will be $\{x_1, x_3\}$ and $\{x_2, x_3\}$.

\end{example}

We will later see that the relationship between unsigned and signed minimal wiring diagrams is not simple as the previous example may suggest. It is not always true that $Mod( \mathcal{D})$ is simply $Mod^{sgn}( \mathcal{D})$ after dropping the negations (Examples \ref{eg:unsigned2} and \ref{eg:signed2}). A data set may have more unsigned than signed minimal wiring diagrams or vice versa (Examples \ref{eg:nonprime_ext_nonprime_unsgn_prime_sgn} and \ref{eg:nonprime_ext_prime_unsgn_nonprime_sgn}). We also note that while a data set will have at least one unsigned wiring diagram, it may not have any signed wiring diagrams if the data was not generated by a unate function (Example~\ref{eg:no_signed_wd}).

The main problem in reverse-engineering the wiring diagrams from data is that the model spaces can have a large number of data points even for small problems. For example, consider the data set $\mathcal{D}$ given in Table~\ref{tab:nonBoolean}. Note that each $s_i \in \mathbb{F}_4^3$ and each $t_i \in \mathbb{F}_4.$ 
\begin{table}[H]
\label{tab:nonBoolean}
    \centering
    \begin{tabular}{|c||c|c|c|c|}
    \hline
    $s_i$ & $(0,2,1)$ & $(1,0,3)$ & $(3,0,3)$ & $(2,3,0)$ \\
    \hline
    $t_i$ & 0 & 0 & 2 & 3\\
    \hline
    \end{tabular}
    \caption{Non Boolean data set.}
\end{table}

In this case there are $4^3-4=60$ inputs with unknown values and hence there are $4^{60}\approx 10^{36}$ functions that fit the data. This is too large to analyze by exhaustive search, but we will use algebraic tools to study the wiring diagrams of functions in $Mod( \mathcal{D})$ and $Mod^{sgn}( \mathcal{D})$ without having to list the functions. 

\subsection{Min-sets}

\begin{definition}[Disposable set]
   A set of variables $Y$ is \emph{disposable} if there exists a function $f \in$ Mod$(\mathcal{D})$ such that supp$(f) \cap Y = \emptyset.$
    
	A set of activators and inhibitors $Y$ is \emph{disposable} if there exists a unate function $f \in$ Mod$^{sgn}(\mathcal{D})$ such that $supp^{sgn}(f) \cap Y = \emptyset.$
\end{definition}

\begin{definition}[Feasible set]
   A set of variables $Y$ is \textit{feasible} if there exists a function $f \in$ Mod$(\mathcal{D})$ such that supp$(f)\subset Y.$
    
	A set of activators and inhibitors $Y$ is \textit{feasible} if there exists a function $f \in$ Mod$^{sgn}(\mathcal{D})$ such that $supp^{sgn}(f) \subset Y.$

\end{definition}

The following definition applies to both signed and unsigned model spaces. 
\begin{definition}[Min-set]

A set of  variables is a \emph{min-set} of $\mathcal{D}$ if it is a minimal feasible set for $\mathcal{D}$.
\end{definition}

Alternatively, we can define a min-set of $\mathcal{D}$ as a set of variables whose complement is a maximal disposable set of $\mathcal{D}$.

The main result in \cite{Jarrah_reveng,Veliz_reveng} is that it is possible to do calculations regarding wiring diagrams of elements in $Mod( \mathcal{D})$ and $Mod^{sgn}( \mathcal{D})$ without having to do any calculations with these sets. More precisely, it is possible to compute the minimal wiring diagrams of functions that have minimal support in $Mod( \mathcal{D})$ and $Mod^{sgn}( \mathcal{D})$ (that is, the min-sets) without listing the actual functions. We now summarize those results.

In \cite{Jarrah_reveng}, the authors developed an algorithm for constructing all unsigned minimal
wiring diagrams based on sets of input-output data $\D$.  The method encodes coordinate changes in the input data $V$ as square-free monomials, generates a monomial ideal from these monomials, and uses Stanley-Reisner theory to  decompose the ideal into primary components that coincide with the min-sets defined above. In summary, for every pair of distinct input vectors $\mathbf{s}=(s_1,\dots,s_n)$ and $\mathbf{s'}=(s'_1,\dots,s'_n)$ in an input data set $V$, we can encode the coordinates in which they differ by a square-free monomial 
$
m(\mathbf{s},\mathbf{s'})=\prod_{s_i\neq s'_i}x_i.
$
The \emph{ideal of non-disposable sets}
\begin{equation}\label{eqn:unsigned_I}
I=\<m(\mathbf{s},\mathbf{s'})\mid t\neq t'\>
\end{equation}
has primary decomposition whose primary components correspond to the the min-sets of $\D$ (proven in \cite{Jarrah_reveng}).

\begin{example} \label{eg:unsigned2}
Consider the data set given in Table~\ref{tab:nonBoolean}. 
We construct the monomials 
$$m(s_1, s_3) = x_1 x_2 x_3 $$
$$m(s_1, s_4)=x_1 x_2 x_3$$
$$m(s_2, s_3)=x_1$$
$$m(s_2, s_4)=x_1 x_2 x_3$$
$$m(s_3, s_4)=x_1 x_2 x_3$$ 

Then, $I = \langle x_1 x_2 x_3, x_1 \rangle$ which has primary decomposition $ I = \langle x_1 \rangle.$ Thus, there is only one (unsigned) min-sets,  $\{x_1\}.$

\end{example}

We now present the algorithm to find all signed min-sets developed in \cite{Veliz_reveng}.\\
\textit{Input:} A set of data $\mathcal{D} = \{(s_1,t_1),\ldots,(s_m,t_m)\}$ where each $s_i \in \mathbb{F}^n$ and $t_i \in \mathbb{F}$. Define $\textrm{sign}(z)$ to be $1$ if $z>0$, $-1$ if $z < 0$, and $0$ if $z=0$.\\
\textit{Output:}  The primary components of $I^{sgn}$ give the signed min-sets. \par
		\textit{Step 1:} Order the data so that the outputs $t_1,\ldots, t_m$ are in non-decreasing order (This step is not needed for the method to work, but makes calculations more efficient).\par
		\textit{Step 2:} For each pair $(s_i,t_i),(s_j, t_j)$ of data points in $\mathcal{D},$ if $t_i \neq t_j$, define a \emph{pseudomonomial} as follows: $$m^{sgn}(s_i,s_j)=\prod_{s_{i k} \neq s_{j k}} (x_k - \textrm{sign}(s_{j k}-s_{i k})).$$\par
		\textit{Step 3:} Let $I^{sgn}=\langle m^{sgn}(s_i, s_j) | t_i < t_j \rangle.$ Note that $I^{sgn}$ is not a monomial ideal. \par
		\textit{Step 4:} Compute the primary decomposition of $I^{sgn}$.
		
The primary components of $I^{sgn}$ correspond to the signed min-sets of $\mathcal{D}$ (proven in \cite{Veliz_reveng}).

\begin{example} \label{eg:signed2}
Consider the data set given in Table~\ref{tab:nonBoolean}.

We construct the pseudomonomials $$m^{sgn}(s_1, s_3) = (x_1-1)(x_2+1)(x_3-1)$$
$$m^{sgn}(s_1, s_4)=m^{sgn}(s_2, s_4)=(x_1-1)(x_2-1)(x_3+1)$$
$$m^{sgn}(s_3, s_4)=(x_1+1)(x_2-1)(x_3+1)$$ 
$$m^{sgn}(s_2, s_3)=(x_1-1)$$
Then, $I^{sgn} = \langle (x_1-1)(x_2-1)(x_3+1), (x_1+1)(x_2-1)(x_3+1), (x_1-1) \rangle$ which has primary decomposition $ I^{sgn} = \langle x_1-1,x_3+1 \rangle \cap \langle x_1-1, x_2-1 \rangle.$ Thus, the signed min-sets are $\{x_1, \overline{x_3}\}$ and $\{x_1, x_2\}.$

\end{example}

The next theorem summarizes several connections between abstract simplicial complexes and squarefree monomial ideals that we will leverage later.

\begin{theorem}\label{tfae}
Let $I$ be a squarefree monomial ideal of $\F[x_1,\ldots,x_n]$, let $\Delta_I$ be the Stanley-Reisner complex of $I$, and let $X=\{x_1,\ldots,x_n\}$. Then the following are equivalent:

\begin{enumerate}
    \item[(a)] $\Delta_I$ has only one facet.
    \item[(b)] The minimal nonfaces of $\Delta_I$ are dimension 0.
    \item[(c)] $I$ is generated by a subset of $X$.
    \item[(d)] $I$ is a prime ideal.
    \item[(e)] Let $M$ be a set of monomials that generate $I$. For each multivariate monomial $m\in M$, there exists a univariate  monomial in $M$ that divides $m$.
\end{enumerate}
\end{theorem}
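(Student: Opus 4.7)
The plan is to prove the five conditions equivalent by verifying (a) $\Leftrightarrow$ (b) $\Leftrightarrow$ (c) $\Leftrightarrow$ (d) as a short chain, and treating (c) $\Leftrightarrow$ (e) separately. The main tool throughout will be the Stanley--Reisner dictionary: the faces of $\Delta_I$ are precisely the subsets $F \subseteq X$ such that $\prod_{x_i \in F} x_i \notin I$, and the minimal monomial generators of $I$ correspond bijectively to the minimal nonfaces of $\Delta_I$.

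For (a) $\Leftrightarrow$ (b), if $\Delta_I$ has a unique facet $F$ then the faces of $\Delta_I$ are exactly the subsets of $F$, so the minimal nonfaces are the singletons $\{x_i\}$ with $x_i \notin F$, all of dimension $0$. Conversely, if every minimal nonface is a singleton $\{x_{i_1}\}, \ldots, \{x_{i_k}\}$, then a subset is a face precisely when it avoids $\{x_{i_1}, \ldots, x_{i_k}\}$, which forces $\Delta_I$ to have $X \setminus \{x_{i_1}, \ldots, x_{i_k}\}$ as its unique facet. The equivalence (b) $\Leftrightarrow$ (c) is then immediate from the dictionary, since the minimal monomial generators of $I$ are exactly the squarefree monomials attached to the minimal nonfaces.

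For (c) $\Rightarrow$ (d), if $I$ is generated by a subset of $X$, then $\F[x_1,\ldots,x_n]/I$ is isomorphic to a polynomial ring in the remaining variables, hence an integral domain, so $I$ is prime. For (d) $\Rightarrow$ (c), I would argue by contradiction: if some minimal monomial generator has the form $m = x_{i_1} \cdots x_{i_\ell}$ with $\ell \geq 2$, then factor $m = x_{i_1} \cdot (x_{i_2} \cdots x_{i_\ell})$; primality forces one of these strict divisors into $I$, contradicting the minimality of $m$.

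Finally, for (c) $\Leftrightarrow$ (e), the forward direction uses that if $I = \langle x_{i_1}, \ldots, x_{i_k} \rangle$ and $M$ is any monomial generating set, then each $x_{i_j}$ lies in $I$ and must be divisible by some monomial in $M$, whose only candidates are $1$ (excluded) and $x_{i_j}$ itself, forcing $x_{i_j} \in M$; then any multivariate $m \in M$ is divisible by one of these univariate generators. For the reverse direction, I would apply (e) to the unique minimal monomial generating set $M_0$ of $I$: any multivariate element would already admit a univariate divisor in $M_0$, contradicting minimality, so $M_0 \subseteq X$. The step requiring the most care is this last one, since one must handle an arbitrary generating set $M$ rather than just the minimal one; the squarefree monomial hypothesis is essential because it underwrites both the existence of a unique minimal monomial generating set and the clean divisibility arguments used throughout.
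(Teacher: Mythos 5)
Your proof is correct. The paper states Theorem~\ref{tfae} as known background and supplies no proof of its own, so there is nothing to compare against; your chain (a)$\Leftrightarrow$(b)$\Leftrightarrow$(c)$\Leftrightarrow$(d), handled via the Stanley--Reisner dictionary and the factor-a-minimal-generator argument for (d)$\Rightarrow$(c), together with the separate treatment of (c)$\Leftrightarrow$(e) for an \emph{arbitrary} monomial generating set $M$, is the standard argument and every step checks out. One minor correction to your closing remark: every monomial ideal, squarefree or not, has a unique minimal monomial generating set; the squarefree hypothesis is what makes $I$ radical and underwrites the correspondence with the simplicial complex $\Delta_I$, not the uniqueness of $M_0$.
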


Part (e) of Theorem~\ref{tfae} is the foundation of Theorem~\ref{thm:uniqueness} which will enable us to algorithmically determine if an input data set will always have a unique min-set regardless of the output.

In the next sections we address the following questions. Is there a way to encode all min-sets regardless of whether they are signed or unsigned? What are the necessary and/or sufficient conditions for uniqueness of min-sets?

\section{A unified approach to signed and unsigned min-sets}

\subsection{Unsigned min-sets and Stanley-Reisner theory}

For unsigned min-sets, in \cite{Jarrah_reveng}, it was established that the set of disposable sets is closed under intersection and, in particular, the set of disposable sets of $\D$ forms an abstract simplicial complex $\Delta_{\mathcal{D}}$. Then, using Stanley-Reisner theory, a bijective correspondence was drawn between $\Delta_{\mathcal{D}}$ and the squarefree monomial ideal $I$ of non-disposable sets defined in (\ref{eqn:unsigned_I}). Furthermore, finding the primary decomposition of $I$ is straightforward: 
For an abstract simplicial complex $\Delta_{\D}$, the primary decomposition of its Stanley-Reisner ideal in $\mathbb{F}[x_1, \dots, x_n]$ is $$I = \bigcap_{\alpha \in \Delta} p^{\overline{\alpha}} = \bigcap_{\substack{\alpha \in \Delta \\ \textrm{maximal}}} p^{\overline{\alpha}}$$  and the primary components of $I$ are the complements of the maximal faces of $\Delta_{\D}$.

\begin{example}
	The simplicial complex corresponding to the ideal $I= \<xy, yz\>$ is $\Delta_I=\{\emptyset, x, y, z, xz\}$. Using this information, we can find the primary decomposition of $I$. The maximal faces of $\Delta_I$ are $y$ and $x z$. So, 
$$
		I = \<xy, yz\>
		= \bigcap_{\substack{\alpha \in \Delta \\ \textrm{maximal}}} p^{\overline{\alpha}} 
		= p^{\overline{y}} \cap p^{\overline{xz}}
		= p^{xz} \cap p^{y}
		= \<x,z\> \cap \<y\>.
$$
	Thus, the primary decomposition of $I$ is $\< x, z\> \cap \<y\>$. 
	
\end{example}

\subsection{Signed min-sets and Stanley-Reisner theory}
In order to gain information about the signs of interactions between variables, the algorithm for signed minimal wiring diagrams uses the primary decomposition of ideals that are not generated by monomials. Due to the ease of computing primary the decomposition of squarefree monomial ideals, 
as well as the desire to treat pseudomonomials ideals as Stanley-Reisner ideals and be able to apply the results in Theorem~\ref{tfae}, we will convert them into squarefree monomial ideals that retain the information about the signs of interactions. This will be done at the expense of doubling the number of variables as explained below; however, even with the increase of variables, the computational benefits are significant.

Consider data set $\mathcal{D} = \{(s_1,t_1),(s_m,t_m),\ldots\}$ where each $s_i \in \mathbb{F}^n$ and $t_i \in \mathbb{F}$. If $t_i\neq t_j$, define the following monomial in the ring with indeterminates in $\{x_1,\ldots,x_n,\overline{x_1},\ldots,\overline{x_n}\}$ (the reason for using $x_i$ and $\overline{x_i}$ is purely mnemonic; one could use $y_i$ and $z_i$ instead, respectively).

\[
m^{ext}(s_i,s_j)  =\prod_{s_{ik}< s_{jk}}x_k \prod_{s_{ik}> s_{jk}}\overline{x_k} 
\]

Then, we define the square-free monomial ideal $I^{ext}=\langle m^{ext}(s_i, s_j) | t_i < t_j \rangle$. 

Note that we can obtain $I$ from $I^{ext}$ by replacing $\overline{x}_j$'s with $x_j$, and $I^{sgn}$ from $I^{ext}$ by replacing $x_j$'s with $x_j-1$ and $\overline{x}_j$'s with $x_j+1$. Because of this the next lemma follows.

\begin{lemma}
Suppose that the primary decomposition of $I^{ext}$ is $I^{ext}=P^{ext}_1\cap P^{ext}_2\cap \ldots \cap P^{ext}_l$. Then, $I=P_1\cap P_2\cap \ldots \cap P_l$, where $P_i$ is the primary ideal obtained from $P^{ext}_i$ by replacing $\overline{x}_j$'s with $x_j$.
Also, $I^{sgn}=P^*_1\cap P^*_2\cap \ldots \cap P^*_l$, where $P^*_i$ is the primary ideal (or the whole ring) obtained from $P^{ext}_i$ by replacing $x_j$'s with $x_j-1$ and $\overline{x}_j$'s with $x_j+1$. 

\end{lemma}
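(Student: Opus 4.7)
The plan is to realize the two substitutions as surjective ring homomorphisms
\[\phi,\psi\colon \F[x_1,\ldots,x_n,\overline{x_1},\ldots,\overline{x_n}]\to \F[x_1,\ldots,x_n],\]
with $\phi(x_j)=\phi(\overline{x_j})=x_j$ and $\psi(x_j)=x_j-1$, $\psi(\overline{x_j})=x_j+1$. Applying each map to the generators verifies that $\phi(m^{ext}(s_i,s_j))=m(s_i,s_j)$ and $\psi(m^{ext}(s_i,s_j))=m^{sgn}(s_i,s_j)$, so $\phi(I^{ext})=I$ and $\psi(I^{ext})=I^{sgn}$. Since each $P^{ext}_i\supseteq I^{ext}$, applying $\phi$ (resp.\ $\psi$) gives $P_i=\phi(P^{ext}_i)\supseteq I$ and $P^*_i=\psi(P^{ext}_i)\supseteq I^{sgn}$, yielding the easy inclusions $\bigcap_i P_i\supseteq I$ and $\bigcap_i P^*_i\supseteq I^{sgn}$.

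For the reverse inclusions I rely on the combinatorial description furnished by Theorem~\ref{tfae}. Each $P^{ext}_i$ has the form $\langle V_i\rangle$ for a minimal vertex cover $V_i\subseteq\{x_1,\ldots,x_n,\overline{x_1},\ldots,\overline{x_n}\}$ of the hypergraph whose edges are the supports of the monomials $m^{ext}(s_p,s_q)$. Then $P_i=\langle\{x_l:x_l\in V_i\text{ or }\overline{x_l}\in V_i\}\rangle$ is a prime monomial ideal, and $P^*_i$ has the form $\langle x_l-\epsilon_{i,l}:l\in J_i\rangle$ with $\epsilon_{i,l}\in\{\pm 1\}$ when $V_i$ avoids having both $x_l$ and $\overline{x_l}$ for every $l$; if $V_i$ contains both for some $l$, then $P^*_i$ contains $(x_l+1)-(x_l-1)=2$ and equals the entire ring, provided $\mathrm{char}\,\F\neq 2$.

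The heart of the argument is a lifting claim. Let $W$ be any minimal vertex cover of the generators of $I$. Setting $\widetilde V=\{x_l,\overline{x_l}:x_l\in W\}$ produces a cover of $I^{ext}$, since for each generator $m^{ext}(s_p,s_q)$ some $x_l\in W$ divides its image $m(s_p,s_q)$, and the corresponding $x_l$ or $\overline{x_l}$ then divides $m^{ext}(s_p,s_q)$. Shrinking $\widetilde V$ to a minimal cover $V_i\subseteq\widetilde V$ of $I^{ext}$, the image $\phi(V_i)$ is itself a cover of $I$ contained in $W$; minimality of $W$ forces $\phi(V_i)=W$. Hence every primary component $\langle W\rangle$ of $I$ appears among the $P_i$, and the remaining $P_j$ are supersets of such primary components and are absorbed by the intersection, giving $\bigcap_i P_i=I$. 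For $I^{sgn}$ the signed min-sets correspond to minimal \emph{constrained} covers of $I^{ext}$---those lacking any pair $x_l,\overline{x_l}$. A minimal constrained cover is automatically a minimal cover of $I^{ext}$ (any proper subset of a constrained set remains constrained), so it appears as some $V_i$ on which $\psi$ is non-degenerate; the degenerate $V_j$ give $P^*_j$ equal to the whole ring and contribute nothing to the intersection, so $\bigcap_i P^*_i=I^{sgn}$.

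The main obstacle is the lifting step: showing that after shrinking $\widetilde V$ one lands in a minimal cover whose $\phi$-image is exactly $W$, and the analogous identification of signed min-sets with constrained minimal covers of $I^{ext}$. A secondary technicality is the hypothesis $\mathrm{char}\,\F\neq 2$; in characteristic two the substitution $\psi$ coincides with $\phi$ up to a translation $x_l\mapsto x_l+1$, and the signed claim reduces to the unsigned one.
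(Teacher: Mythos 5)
Your argument is correct, and it supplies a genuine proof where the paper offers essentially none: in the paper the lemma is prefaced only by the observation that the two substitutions carry the generators $m^{ext}(s_i,s_j)$ to $m(s_i,s_j)$ and $m^{sgn}(s_i,s_j)$ (your $\phi(I^{ext})=I$ and $\psi(I^{ext})=I^{sgn}$), followed by the assertion that ``because of this the next lemma follows.'' That observation yields only your easy inclusions $I\subseteq\bigcap_i P_i$ and $I^{sgn}\subseteq\bigcap_i P^*_i$, since a ring homomorphism need not commute with intersections of ideals; the substantive content of the lemma is the reverse containments, and that is exactly what your vertex-cover lifting argument establishes: every minimal prime $\langle W\rangle$ of $I$ lifts, by doubling $W$ to $\{x_l,\overline{x_l}:x_l\in W\}$ and shrinking to a minimal cover of $I^{ext}$, to a component $P^{ext}_i$ with $\phi(P^{ext}_i)=\langle W\rangle$, and every minimal constrained cover is automatically a minimal cover of $I^{ext}$, so the coordinate-form minimal primes of $I^{sgn}$ all occur among the $\psi(P^{ext}_i)$ while the degenerate components collapse to the whole ring and are absorbed. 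Two remarks. First, your signed case uses that $I^{sgn}$ equals the intersection of the coordinate-form primes attached to minimal constrained covers; you flag this as an unresolved ``obstacle,'' but it is precisely the pseudomonomial decomposition result the paper already imports from \cite{Veliz_reveng} (primary components of $I^{sgn}$ correspond to signed min-sets), so invoking that reference closes your argument---state it as an external input rather than a gap. Second, your hypothesis $\mathrm{char}\,\F\neq 2$ is harmless and is, if anything, a point of care the paper glosses over: the signed construction is only meaningful when $1\neq -1$ (the paper's Proposition~\ref{var-and-its-conj-min-set} computes in $\F_3$), and as you observe the characteristic-two case degenerates to the unsigned statement.
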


Note that by doing the replacements some ideals may be redundant from the intersection and some ideals may have redundant generators. However, that redundancy can easily be identified and simplified, and thus we will obtain the primary decomposition of $I$ and $I^{sgn}$. Then, the min-sets can easily be found. In this sense, we have the following theorem that shows how a single ideal, namely $I^{ext}$, encodes all types of min-sets.

\begin{theorem} \label{thm:unified}
The primary decomposition of $I^{ext}$ encodes the unsigned min-sets and the signed min-sets. 
\end{theorem}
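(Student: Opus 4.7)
The strategy is to treat this theorem as a direct corollary of the preceding Lemma, which already handles the algebraic heavy lifting by relating the primary decomposition of $I^{ext}$ to those of $I$ and $I^{sgn}$ via two variable substitutions. To close the gap, I need to (i) confirm that these substitutions carry the generators of $I^{ext}$ to the generators of $I$ and $I^{sgn}$, and (ii) invoke the known results of \cite{Jarrah_reveng} and \cite{Veliz_reveng} that identify primary components of $I$ and $I^{sgn}$ with unsigned and signed min-sets, respectively.

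First, I would verify the generator-level correspondence. Under the substitution $\overline{x}_j \mapsto x_j$, the generator
\[
m^{ext}(s_i,s_j) = \prod_{s_{ik}<s_{jk}} x_k \prod_{s_{ik}>s_{jk}} \overline{x_k}
\]
collapses to $\prod_{s_{ik}\neq s_{jk}} x_k = m(s_i,s_j)$, so the image ideal is exactly $I$. Under the substitution $x_j\mapsto x_j-1$, $\overline{x_j}\mapsto x_j+1$, the same generator becomes $\prod_{s_{ik}\neq s_{jk}}(x_k-\mathrm{sign}(s_{jk}-s_{ik})) = m^{sgn}(s_i,s_j)$, so the image ideal is exactly $I^{sgn}$. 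This is the content that justifies the Lemma, and it makes the link between $I^{ext}$ and each of $I, I^{sgn}$ explicit.

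Next, given the primary decomposition $I^{ext} = P_1^{ext}\cap\cdots\cap P_l^{ext}$, the Lemma tells us that the two substitutions produce intersections $\bigcap_i P_i$ and $\bigcap_i P_i^*$ equal to $I$ and $I^{sgn}$ respectively. Each $P_i^{ext}$ is prime and generated by a subset of $\{x_1,\dots,x_n,\overline{x_1},\dots,\overline{x_n}\}$ by Theorem~\ref{tfae}, so the first substitution sends it to a prime ideal generated by a subset of $\{x_1,\dots,x_n\}$, and the second substitution sends it to an ideal generated by a subset of $\{x_j-1,x_j+1\}$, which is either primary or (when both $x_j$ and $\overline{x_j}$ appeared in $P_i^{ext}$ and the characteristic is not 2) the whole ring. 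After discarding trivial components and any $P_i$ (resp.\ $P_i^*$) that is non-minimally contained in another, the remaining intersections are the primary decompositions of $I$ and $I^{sgn}$. Appealing to \cite{Jarrah_reveng} and \cite{Veliz_reveng}, these primary components correspond precisely to the unsigned and signed min-sets of $\mathcal{D}$, completing the proof.

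The main point of care, rather than a real obstacle, is bookkeeping during the redundancy cleanup after substitution: a single $P_i^{ext}$ may yield the full ring under the $\pm 1$ substitution or a strictly larger component after collapsing $\overline{x_j}\mapsto x_j$, so the map from primary components of $I^{ext}$ to those of $I$ and $I^{sgn}$ is generally neither injective nor surjective. Nonetheless, the underlying collection of intersected ideals is preserved, so the encoding of both kinds of min-sets by the single object $I^{ext}$ is faithful.
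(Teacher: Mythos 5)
Your proposal is correct and follows essentially the same route as the paper: the theorem is presented there as a direct consequence of the preceding lemma (the two variable substitutions carrying $I^{ext}$ to $I$ and $I^{sgn}$), followed by redundancy cleanup and an appeal to \cite{Jarrah_reveng} and \cite{Veliz_reveng} for the identification of primary components with min-sets. Your generator-level verification and the remark about non-injective/non-surjective bookkeeping after substitution simply make explicit what the paper leaves implicit.
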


At first glance, the duplication of variables in $I^{ext}$ may cause Theorem \ref{thm:unified} to not result in a computational advantage. To address this, we performed simulations of computing the primary decomposition for $I^{ext}$ and $I^{sgn}$. Fig.~\ref{fig:hist_speed} shows that the speed gained by using monomial ideals overcomes the speed lost by duplicating variables. 

\begin{figure}[ht]\label{fig:hist_speed}
\centering
    \includegraphics[width=0.6\textwidth]{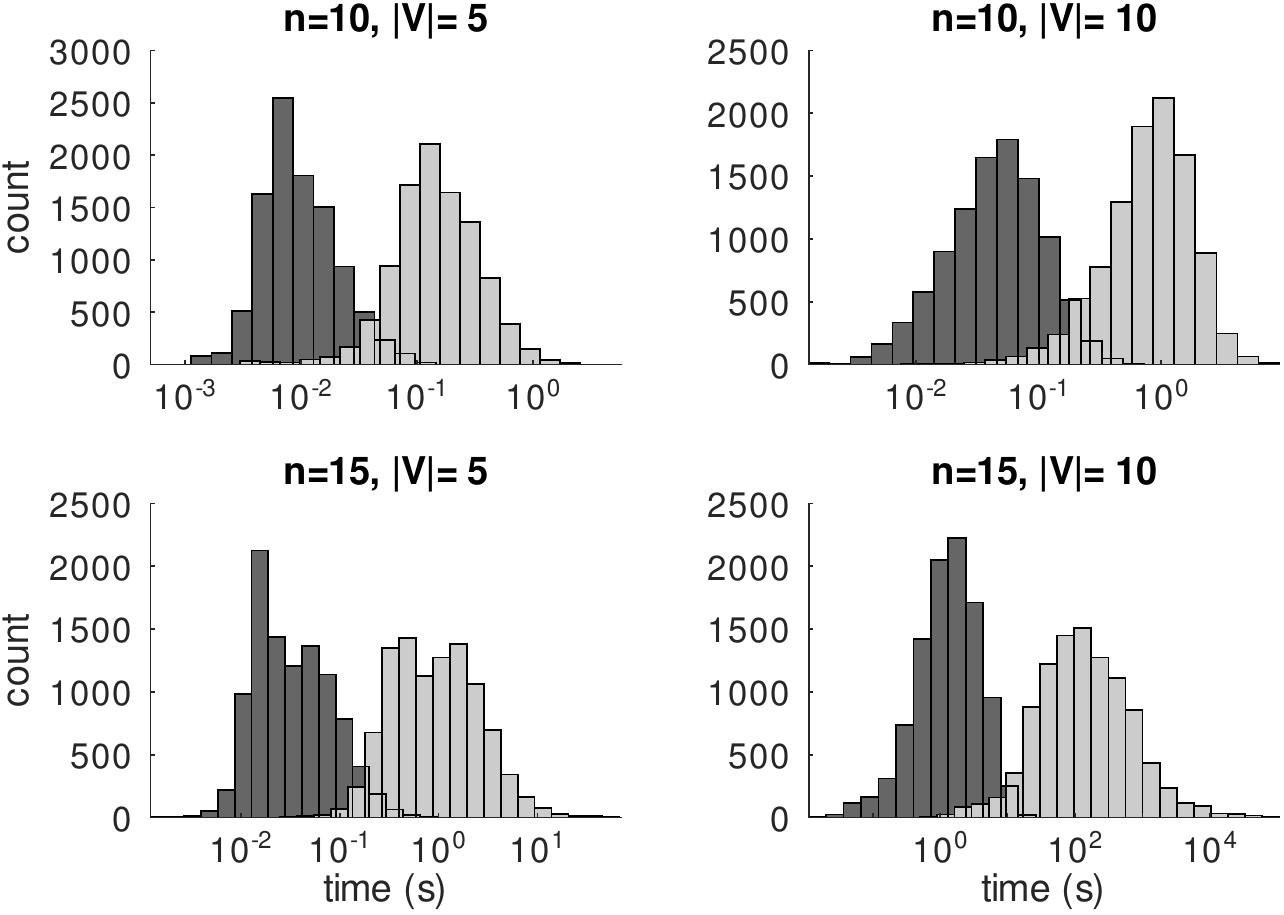}
    \caption{
Time comparison (logarithmic $x$-axis) between computing the primary decomposition of $I^{ext}$ (dark gray) vs $I^{sgn}$ (light gray). The histograms show that using the extended ideal is significantly faster than the original ideal. As the number of variables $n$, and the size of the input set $V$ increase, the difference in timing becomes more drastic.    
    }
\end{figure}

\begin{example}\label{eg:F5}
	Let $\mathcal{D}$ be the following data set with points in $\F_5^5$. 
	\begin{table}[ht]
		\centering
		\begin{tabular}{|c||c|c|c|c|c|}
			\hline
			$s_i$ & $(0,1,2,1,0)$ & $(0,1,2,1,1)$ & $(0,1,2,1,4)$ & $(3,0,0,0,0)$ & $(1,1,1,1,3)$ \\
			\hline
			$t_i$ & 0 & 0 & 1 & 3 & 4\\
			\hline
		\end{tabular}
	\caption{}
	\end{table}

We first compute the monomials
	\begin{align*}
		m^{ext}(s_1,s_3)&=m^{ext}(s_2,s_3)=x_5\\
		m^{ext}(s_1,s_5)&=m^{ext}(s_2,s_5)=x_1 \overline{x_3} x_5\\
		m^{ext}(s_2,s_4)&=m^{ext}(s_3,s_4)=x_1 \overline{x_2} \overline{x_3}\overline{x_4}\overline{x_5}\\
		m^{ext}(s_1,s_4)&=x_1\overline{x_2}\overline{x_3}\overline{x_4}\\
		m^{ext}(s_4,s_5)&=\overline{x_1}x_2 x_3 x_4 x_5\\
		m^{ext}(s_3,s_5)&=x_1 \overline{x_3} \overline{x_5}
	\end{align*}
Then, $$I^{ext} = \<x_5,x_1 \overline{x_3} x_5,  x_1 \overline{x_2} \overline{x_3}\overline{x_4}\overline{x_5}, x_1\overline{x_2}\overline{x_3}\overline{x_4}, \overline{x_1}x_2 x_3 x_4 x_5, x_1 \overline{x_3} \overline{x_5}\>$$	has the primary decomposition  
$$ 
I^{ext} = \<x_1, x_5\>\cap \<\overline{x_3}, x_5\> \cap \<\overline{x_2}, \overline{x_5}, x_5\>\cap \<\overline{x_4}, \overline{x_5}, x_5\> .
$$

Theorem \ref{thm:unified} states that this primary decomposition encodes the unsigned and signed min-sets. Indeed, if we are interested in unsigned min-sets, we simply replace $\overline{x_i}$ by $x_i$ to obtain $I$. That gives 
$$I =  
\<x_1, x_5\>\cap \<x_3, x_5\> \cap \<x_2, x_5, x_5\>\cap \<x_4, x_5, x_5\>
$$
which results in the primary decomposition 
$$I =  
\<x_1, x_5\>\cap \<x_3, x_5\> \cap \<x_2, x_5\>\cap \<x_4, x_5\>
$$
and thus the unsigned min-sets are $\{x_1,x_5\}, \{x_3,x_5\}, \{x_2,x_5\}$, and $\{x_4,x_5\}$.

If we are interested in signed min-sets, we simply replace $x_i$ by $x_i-1$ and $\overline{x_i}$ by $x_i+1$ to obtain $I^{sgn}$. That gives 
$$I^{sgn} =  
\<x_1-1, x_5-1\>\cap \<x_3+1, x_5-1\> \cap \<x_2+1, x_5+1, x_5-1\>\cap \<x_4+1, x_5+1, x_5-1\>.
$$

The last two ideals are the equal to the whole ring, so we obtain the primary decomposition 
$$I^{sgn} =  
\<x_1-1, x_5-1\>\cap \<x_3+1, x_5-1\>
$$
and thus the signed min-sets are $\{x_1,x_5\}$ and  $\{\overline{x_3},x_5\}$.

\end{example}

We remark that in general the number of primary ideals in the primary decomposition of $I^{ext}$ is a bound for the number of signed and unsigned min-sets. The next examples show that neither the number of unsigned or signed min-sets is an upper bound for the other.

\begin{example}\label{eg:nonprime_ext_nonprime_unsgn_prime_sgn}
	Let $\mathcal{D}$ be the following data set with points in $\F_2^3$. 
	\begin{table}[ht]
		\centering
		\begin{tabular}{|c||c|c|c|c|c|}
			\hline
			$s_i$ & $(0,0,0)$ & $(1,0,1)$ & $(1,1,0)$ & $(0,1,1)$ \\
			\hline
			$t_i$ & 0 & 0 & 1 & 1\\
			\hline
		\end{tabular}
	\caption{}
	\end{table}
	
	In this example the primary decomposition of $I^{ext}$ is 
	$$ I^{ext}=\<x_2\>\cap \<x_1,\overline{x_1},x_3,\overline{x_3}\>.$$
	
	Then $I=\<x_2\>\cap \<x_1,x_1,x_3,x_3\> = \<x_2\>\cap \<x_1,x_3\>$ and therefore the unsigned min-sets are $\{x_2\}$ and $\{x_1,x_3\}$. Also, 
	$I^{sgn}=\<x_2-1\>\cap \<x_1-1,x_1+1,x_3-1,x_3+1\> = \<x_2-1\>$, so there is a unique signed min-set $\{x_2\}$.
	
\end{example}

\begin{example}\label{eg:nonprime_ext_prime_unsgn_nonprime_sgn}
	Let $\D$ be the following data set  with points in $\F_3^3$. 
 
\begin{table}[ht]
	\centering
		\begin{tabular}{|c||c|c|c|c|c|}
			\hline
			$s_i$ & $(1,0,1)$ & $(0,0,0)$ & $(0,2,0)$ & $(2,1,1)$ \\
			\hline
			$t_i$ & 0 & 0 & 1 & 2\\
			\hline
		\end{tabular}
\caption{}
\end{table}
	
	In this example the primary decomposition of $I^{ext}$ is 
	$$ I^{ext}=\<x_2,\overline{x_2}\>\cap \<x_1,x_2\>\cap \<x_2,x_3\>.$$
	
	Then $I=\<x_2,x_2\>\cap \<x_1,x_2\>\cap \<x_2,x_3\> = \<x_2\>$ and therefore the unique unsigned min-set is $\{x_2\}$. Also, 
	$I^{sgn}=\<x_2-1,x_2+1\>\cap \<x_1-1,x_2-1\>\cap \<x_2-1,x_3-1\> = \<x_1-1,x_2-1\>\cap \<x_2-1,x_3-1\>$, so the signed min-sets are $\{x_1,x_2\}$ and $\{x_2,x_3\}$.
	
\end{example}

These examples show that for a signed min-set $W$, if we denote with $W^*$ the set obtained from $W$ after dropping the signs, 
then $W^*$ always contains some unsigned min-set. For instance, in Example \ref{eg:F5}, $W=\{\overline{x_3},x_5\}$ is a signed min-set and $W^*=\{x_3,x_5\}$, which contains the unsigned min-set $\{x_3,x_5\}$. Similarly, in Example \ref{eg:nonprime_ext_prime_unsgn_nonprime_sgn}, $W=\{x_1,x_2\}$ is a signed min-set and $W^*=\{x_1,x_2\}$, which contains the unsigned min-set $\{x_2\}$.
This property is always valid.

\begin{proposition}
Let $W$ be a signed min-set for a data set $\mathcal{D}$ and $W^*$ be the set obtained from $W$ by dropping the signs. Then, $W^*$ contains at least one unsigned min-set.
\end{proposition}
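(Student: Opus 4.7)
The plan is to avoid the ideal-theoretic machinery entirely and argue directly at the level of functions. The key observation is that dropping signs from the signed support of a unate function $f$ recovers exactly its ordinary support: if $\text{supp}^{sgn}(f) = \{x_{i_1},\ldots,x_{i_k},\overline{x_{j_1}},\ldots,\overline{x_{j_\ell}}\}$, then $\text{supp}(f) = \{x_{i_1},\ldots,x_{i_k},x_{j_1},\ldots,x_{j_\ell}\}$. Moreover, unate functions are, in particular, functions, so $\text{Mod}^{sgn}(\D) \subseteq \text{Mod}(\D)$. These two facts together say that signed feasibility drops down to unsigned feasibility after erasing the overlines.

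Carrying this out: first, because $W$ is a signed min-set, it is in particular a signed feasible set, so there exists a unate $f \in \text{Mod}^{sgn}(\D)$ with $\text{supp}^{sgn}(f)\subseteq W$. Second, the same $f$ lies in $\text{Mod}(\D)$, and by the observation above its ordinary support satisfies $\text{supp}(f)\subseteq W^*$. Hence $W^*$ is an unsigned feasible set.

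Third, I invoke the elementary fact that every (unsigned) feasible set contains at least one unsigned min-set: if $W^*$ itself is not already minimal among feasible sets, then some proper subset is feasible; since the ambient variable set is finite, iterating this descent terminates at a minimal feasible subset of $W^*$, i.e., an unsigned min-set contained in $W^*$. This gives the conclusion.

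The main potential obstacle is bookkeeping about what ``dropping the signs'' means at the level of the ideal $I^{ext}$ versus at the level of supports; working with functions rather than with the primary decomposition sidesteps this entirely, so the argument stays short. One small thing worth spelling out is that unate functions are genuine functions and fit the data identically, so the inclusion $\text{Mod}^{sgn}(\D) \subseteq \text{Mod}(\D)$ is immediate from the definitions of the two model spaces.
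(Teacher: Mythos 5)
Your proposal is correct and follows essentially the same route as the paper's own proof: take a unate $f\in \mathrm{Mod}^{sgn}(\mathcal{D})$ witnessing the signed feasibility of $W$, observe that $f\in \mathrm{Mod}(\mathcal{D})$ with $\mathrm{supp}(f)\subseteq W^*$, so $W^*$ is (unsigned) feasible and therefore contains a min-set. Your version is slightly more careful in spelling out the finite-descent step and in using $\mathrm{supp}^{sgn}(f)\subseteq W$ rather than equality, but the argument is the same.
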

\begin{proof}
Suppose $W$ is a signed min-set. Then there is a unate function $f\in Mod^{sgn}(\mathcal{D})$ with $W$ as its wiring diagram. Since $f\in Mod(\mathcal{D})$ as well, $W^*$ is the unsigned support of $f$. Then, $W^*$ must contain contain some unsigned min-set. 
\end{proof}

\subsection{Sufficient condition for unique signed and unsigned min-sets }

Let $V$ be a set of inputs with $|V|=r$ and let $\Omega= \{x_1, \ldots, x_n, $ $\overline{x_1}, \ldots, \overline{x_n}\}$. We seek to construct a multiset of all possible monomials, keeping track of the signs in addition to the differences between inputs. Let $\M_\Omega$ denote this multiset. In order to compute signed min-sets, we order the data so that the outputs were non-decreasing. Due to this, when constructing these monomials, the order of inputs in $V$ is crucial and influences the sign associated to a variable. \par 
To construct this multiset, we proceed as if all outputs are different. For any two inputs, $s_i, s_j \in V$, the corresponding outputs $t_i$ and $t_j$ are such that either $t_i < t_j$ and so $s_i$ appears before $s_j$, or $t_j < t_i$ in which case $s_j$ would appear before $s_i$. This change in order only affects the sign associated with the variables in the monomial, not the variables themselves. For this reason, in constructing all possible monomials that keep track of the signs, it suffices to select an arbitrary ordering to initially form the monomials. Given two inputs, other orderings of the inputs produce either the initial monomial or its conjugate. \par 
Now that the multiset $\M_\Omega$ has been constructed, we can start answering the question of which sets of inputs correspond to unique signed minimal wiring diagrams.\par
Consider the following cases for $\M_\Omega$. We will view the generators of $I^{ext}$ in terms of $\Omega$, so that $I^{ext}$ is a squarefree monomial ideal. 
\begin{itemize}
    \item Type 1: All of the monomials in $\M_\Omega$ are univariate.
    \item Type 2: There exists a multivariate monomial $m \in \M_\Omega$ such that $\textrm{supp}(m) \cap \M_\Omega = \emptyset$.
    \item Type 3a: For every multivariate monomial $m \in \M_\Omega$, $\textrm{supp}(m) \cap \M_\Omega  \not= \emptyset$ and for every output assignment $T$, $I^{ext}$ is prime.
    \item Type 3b: For every multivariate monomial $m \in \M_\Omega$, $\textrm{supp}(m) \cap \M_\Omega  \not= \emptyset$ and there exists an output assignment $T$ such that $I^{ext}$ is not prime.
\end{itemize}

Not all data sets have signed min-sets. In particular, if both a variable and its conjugate appear, then it is not possible to find a signed min-set, which means that the signed model space is empty. 

\begin{proposition}\label{var-and-its-conj-min-set}
Let $\D$ be a set of input-output data and $I^{sgn}=\langle p_1,\ldots,p_s\rangle$ be the corresponding ideal of $R=\F[x_1,\ldots,x_n]$ generated by pseudomonomials. If, for some $k \in \{1,\ldots,n\}$, both $x_k -1$ and $x_k+1$ are in $I^{sgn}$, then $\D$ does not come from a unate function.
\end{proposition}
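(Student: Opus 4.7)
The plan is to prove the contrapositive: if $\D$ comes from a unate function, then $I^{sgn}$ cannot contain both $x_k - 1$ and $x_k + 1$. The approach leverages the structure of the primary decomposition of pseudomonomial ideals from \cite{Veliz_reveng}, where each primary (in fact prime) component has the form $\langle x_{j_1} - \epsilon_{j_1}, \ldots, x_{j_m} - \epsilon_{j_m}\rangle$ with $\epsilon_{j_l} \in \{-1,+1\}$, corresponding to the activators and inhibitors of a signed min-set.

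First, I would observe that if $\D$ is generated by a unate function $f$, then $f \in \textrm{Mod}^{sgn}(\D)$, so this space is nonempty; by \cite{Veliz_reveng} the primary decomposition $I^{sgn} = P_1 \cap \cdots \cap P_l$ therefore has at least one factor with each $P_i$ a proper prime ideal of the above form. Next, I would show that no such prime $P_i$ can contain both $x_k - 1$ and $x_k + 1$: passing to the quotient $\F[x_1,\ldots,x_n]/P_i$, which identifies each $x_{j_l}$ with $\epsilon_{j_l}$, the element $x_k - 1$ reduces to $0$ only when $k = j_l$ for some $l$ with $\epsilon_{j_l}=1$, while $x_k + 1$ reduces to $0$ only when $\epsilon_{j_l} = -1$; these conditions are mutually exclusive for a single index.

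Finally I would combine the steps: since $I^{sgn} \subseteq P_i$ for every $i$, if both $x_k - 1$ and $x_k + 1$ lay in $I^{sgn}$, they would lie in every $P_i$, contradicting what was just established. Hence the hypothesis forces $\textrm{Mod}^{sgn}(\D) = \emptyset$, i.e., $\D$ cannot come from a unate function. The main obstacle is ensuring correct invocation of the structure theorem of \cite{Veliz_reveng} — specifically that every primary component is a prime of the form $\langle x_{j_l} - \epsilon_{j_l}\rangle_l$ — after which the quotient argument is routine. One small caveat worth flagging: the quotient argument implicitly uses $\pm 2 \neq 0$, so the proposition is only substantive when $\textrm{char}(\F) \neq 2$; in characteristic $2$ the elements $x_k - 1$ and $x_k + 1$ coincide and the hypothesis carries no information about unateness.
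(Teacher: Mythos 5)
Your proof is correct, but it takes a noticeably heavier route than the paper's. The paper's argument is a one-line ideal-membership computation: if both $x_k-1$ and $x_k+1$ lie in $I^{sgn}$, then so does their difference $(x_k+1)-(x_k-1)=2$, which is a unit when $\mathrm{char}(\F)\neq 2$; hence $I^{sgn}=R$, there is no primary decomposition and no signed min-set, so $\mathrm{Mod}^{sgn}(\D)=\emptyset$ and the data cannot come from a unate function. You instead argue the contrapositive by invoking the full structure theorem of the signed reverse-engineering paper --- that every primary component is a prime of the form $\langle x_{j_l}-\epsilon_{j_l}\rangle_l$ --- and then showing by a quotient computation that no such prime can contain both $x_k-1$ and $x_k+1$. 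That works, but it uses as a black box a result whose proof is far more involved than the proposition itself, and it quietly requires the auxiliary fact that a nonempty $\mathrm{Mod}^{sgn}(\D)$ forces $I^{sgn}$ to be proper (which is most directly seen by exhibiting the common zero attached to a unate function in the model space --- essentially the same content as the correspondence you cite, so no circularity, but worth making explicit). What your approach buys is a slightly sharper statement: not only is $I^{sgn}$ improper, but no candidate sign assignment can simultaneously satisfy both constraints, which makes the combinatorial obstruction visible. What the paper's approach buys is brevity and independence from the decomposition machinery. Your caveat about characteristic $2$ is well taken and applies equally to the paper's proof (whose displayed computation $1+1=1$ in $\F_3$ is a typo for $2$, harmless since $2$ is still a unit); in the signed setting one always works over a field where $1\neq -1$, so the proposition is substantive exactly in that regime.
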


\begin{proof}
Suppose $I^{sgn}$ is a pseudomonomial ideal of $R=\F_3[x_1, \ldots, x_n]$  such that both $x_{i_1} +1$  and $x_{i_1}-1$ are elements of $I^{sgn}$. Since $I^{sgn}$ is an ideal, this implies that $(x_{i_1} +1)-(x_{i_1} -1)$ is also an element of $I^{sgn}$. Note that
$$(x_{i_1} +1)-(x_{i_1} -1)= x_{i_1} - x_{i_1} +1 +1 = 1 \in \F_3.$$ Thus, since $1 \in I^{sgn}$, it follows that $I^{sgn}=R$. Since $I^{sgn}$ is not a proper ideal of $R$, we are unable to find the primary decomposition of $I^{sgn}$. Since there are no signed min-sets, $Mod^{sgn}(\mathcal{D})$ must be empty and thus, the data set does not come from a unate function.
\end{proof}

\begin{example}\label{eg:no_signed_wd}
The following data set over $\F_3$ is an example of an output assignment for which there is no signed min-set.
	\begin{table}[ht]
		\centering
		\begin{tabular}{|c||c|c|c|c|}
			\hline
			$s_i$ & $(1,1,0)$ & $(1,2,0)$ & $(1,2,2)$ & $(1,0,0)$ \\
			\hline
			$t_i$ & 0 & 1 & 1 & 2 \\
			\hline
		\end{tabular}
	\caption{}
	\end{table}
	
	We have the following pseudomonomials: $p(s_1, s_2) = x_2-1$, $p(s_1, s_3)=(x_2-1)(x_3-1)$, $p(s_1, s_4)=x_2+1$, $p(s_2, s_4)=x_2+1$, $p(s_3, s_4)=(x_2+1)(x_3+1)$ and we find the find primary decomposition of the ideal $I^{sgn}=\<x_2-1, (x_2-1)(x_3-1),x_2+1, (x_2+1)(x_3+1)\>$.

	As noted in Proposition \ref{var-and-its-conj-min-set}, since both $x_2-1$ and $x_2+1$ are generators of $I^{sgn}$, $I^{sgn}$ is not a proper ideal of $R$ and so it does not have a primary decomposition, leading us to conclude that there are no signed min-sets for this data set and that the data does not come from a unate function. 
\end{example}

\begin{theorem}\label{thm:uniqueness}
If the set of inputs $V$ corresponds to $\M_\Omega$ of Type 1 or Type 3a, then $V$ has at most one signed min-set for every output assignment $T$ .

\end{theorem}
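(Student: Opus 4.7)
The plan is to reduce the claim to a statement about the primary decomposition of $I^{ext}$ and then invoke the lemma preceding Theorem \ref{thm:unified}. Specifically, I will show that under either hypothesis, for every output assignment $T$ the ideal $I^{ext}$ is prime (so its primary decomposition has a single component), and then translate this single component through the substitution $x_i \mapsto x_i - 1$, $\overline{x_i} \mapsto x_i + 1$ to obtain at most one primary component of $I^{sgn}$, hence at most one signed min-set.

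First I would handle Type 1. The multiset $\M_\Omega$ was defined using an arbitrary ordering of the inputs, and the text already observes that swapping any two inputs only replaces a monomial by its conjugate (swapping $x_k$ with $\overline{x_k}$ in each factor). Since conjugation sends univariate monomials to univariate monomials, every generator of $I^{ext}$ (for any output assignment $T$) is univariate. By part (c) of Theorem \ref{tfae}, $I^{ext}$ is then generated by a subset of $\{x_1,\dots,x_n,\overline{x_1},\dots,\overline{x_n}\}$ and hence prime by part (d). Type 3a is even easier: the hypothesis directly states that $I^{ext}$ is prime for every $T$, so there is nothing to do at this step.

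Next I would conclude from primality that the primary decomposition of $I^{ext}$ has exactly one component, namely $I^{ext}$ itself. Applying the lemma before Theorem \ref{thm:unified}, the primary decomposition of $I^{sgn}$ is obtained by substituting $x_j - 1$ for $x_j$ and $x_j + 1$ for $\overline{x_j}$ in this single component. The result is either a proper primary ideal whose signed min-set is recovered from its generators, or the whole ring (this occurs precisely when both $x_k$ and $\overline{x_k}$ lie among the generators, cf.\ Proposition \ref{var-and-its-conj-min-set}), in which case $\textrm{Mod}^{sgn}(\D)$ is empty and there are no signed min-sets at all. Either way, $V$ has at most one signed min-set for $T$.

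The main subtlety, and the only place where one has to be careful, is step one for Type 1: one must justify that $\M_\Omega$ being univariate guarantees that the generators of $I^{ext}$ remain univariate under every legitimate reordering dictated by $T$. This follows immediately from the conjugation remark in the paper, but it is the single fact that makes the argument go through. Everything else is a direct application of Theorem \ref{tfae} and the substitution lemma, so I expect no further technical obstacle.
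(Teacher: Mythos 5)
Your proposal is correct and follows essentially the same route as the paper: for Type 1 you show every generator of $I^{ext}$ is univariate so that Theorem \ref{tfae} gives primality, for Type 3a primality is immediate from the hypothesis, and in both cases the single primary component passes through the substitution to give at most one signed min-set (with the variable-and-conjugate case handled via Proposition \ref{var-and-its-conj-min-set}). The only cosmetic difference is that you invoke the lemma preceding Theorem \ref{thm:unified} explicitly where the paper leaves that step implicit.
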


\begin{proof}
    If $\M_\Omega$ is of Type 3a, then for every output assignment $T$, $I^{ext}$ is prime. It is possible that both a variable and its conjugate are in $I^{ext}$. If such a situation occurs, then by Proposition~\ref{var-and-its-conj-min-set} no signed min-set exists. If not, then $V$ has a unique signed min-set. \par 
	Suppose $\M_\Omega$ is of Type 1; that is, suppose every monomial is univariate. We will show that for any output assignment, $I^{ext}$ is prime. Let $\Omega=\{x_1, \ldots, x_n,\overline{x_1}, \ldots, \overline{x_n}\}$. Given an arbitrary output assignment $T$, we have a submultiset $S$ of $\M_\Omega$. So, the ideal $I^{ext}$ is generated by elements of $S$, which are all univariate monomials. Since $I^{ext}$ is generated by a subset of $\Omega$, by Theorem~\ref{tfae} we conclude that it is a prime ideal. If $T$ is not an output assignment such that both a variable and its conjugate are in $I^{ext}$, then $V$ corresponds to a unique signed min-set. Therefore, if $\M_\Omega$ is of Type 1 or Type 3a, then the input set $V$ has at most one signed min-set for every output assignment. \par
	
\end{proof}

Notice that $I^{ext}$ not being a prime ideal does not guarantee that $V$ has multiple signed min-sets. For example, in Example \ref{eg:nonprime_ext_nonprime_unsgn_prime_sgn} we saw that $\M_\Omega$ was of Type 2 and indeed $I^{ext}=\langle y \rangle \cap \langle x_1, x_3, \overline{x_1}, \overline{x_3}\rangle$ is not prime; however, there is just one signed min-set, $\{ \overline{x_2} \}$, since $I^{sgn} = \langle x_2-1\rangle \cap \langle x_1-1, x_1+1, x_3-1, x_3+1\rangle = \langle x_2-1 \rangle$.

\begin{example}
	Let $V=\{s_1=(1,0,0), s_2=(1,1,0), s_3=(1,2,0), s_4=(1,2,2)\}$ be a set of inputs. Then, based on this initial ordering, we compute all possible pseudomonomials, keeping track of which inputs produce a given monomial and rewriting the pseudomonomials in terms of the expanded set $\Omega$: $\M_\Omega = \{p(s_1, s_2)= x_2-1=x_2, p(s_1, s_3)=x_2-1 = x_2, p(s_1, s_4)= (x_2-1)(x_3-1)=x_2 x_3, p(s_2, s_3)=x_2-1 = x_2, p(s_2, s_4)=(x_2-1)(x_3-1)=x_2 x_3, p(s_3, s_4) = x_3-1 =x_3, p(s_2, s_1) = x_2+1 = \overline{x_2}, p(s_3, s_1) = x_2+1 = \overline{x_2}, p(s_4, s_1) = (x_2+1)(x_3+1)=\overline{x_2 x_3}, p(s_3, s_2)= x_2+1 = \overline{x_2}, p(s_4, s_2)= (x_2+1)(x_3+1)= \overline{x_2 x_3}, p(s_3, s_2) = x_3+1=\overline{x_3}\}$. \par
	Note that $\M_\Omega$ is of Type 3. In order to establish if $\M_\Omega$ is of Type 3a or Type 3b, we must determine if it is possible to have an output assignment such that $m^{ext}$ is a multivariate generator of $I^{ext}$ and no univariate monomial that divides $m^{ext}$ is a generator of $I^{ext}$. We consider the multivariate monomials and their univariate divisors in $\M_\Omega$. The multivariate monomials are $p(s_1, s_4)= x_2x_3$,  $p(s_2, s_4)=x_2x_3$, $p(s_4, s_1)= \overline{x_2x_3}$, and $p(s_4, s_2)= \overline{x_2x_3}$. We can form the following systems for $p(s_1, s_4)$ and $p(s_2, s_4)$, respectively. Since the order of the inputs affects the corresponding pseudomonomials, we can also form systems for $p(s_4, s_1)$ and $p(s_4, s_2)$.
	
	\begin{align*}
		t_1 &< t_4 & t_2 &< t_4 & t_4 &< t_1 & t_4 &< t_2 \\
		t_1 &= t_2  &  t_1 &= t_2 & t_1 &= t_2  &  t_1 &= t_2\\
		t_1 &= t_3 & t_1 &= t_3 & t_1 &= t_3 & t_1 &= t_3 \\
		t_2 &= t_3 & t_2 &= t_3 & t_2 &= t_3 & t_2 &= t_3\\
		t_3 &= t_4 &  t_3 &= t_4 & t_3 &= t_4 &  t_3 &= t_4
	\end{align*}

	All of the systems are inconsistent. This implies that, for every output assignment, $I^{ext}$ is a prime ideal. Thus, $\M_\Omega$ is of Type 3a and so, by Theorem \ref{thm:uniqueness}, $V$ has at most one signed min-set for every output assignment. 
	
\end{example}

The following corollary specializes Theorem~\ref{thm:uniqueness} for unsigned min-sets, where the original ideal is already square-free and $\Omega$ is the original sets of variables without extending them.
\begin{corollary}
    Let $V$ be a set of inputs corresponding to $\mathcal{M}$ of Type 1 or 3a. Then $V$ has a unique unsigned min-set.
\end{corollary}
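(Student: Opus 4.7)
The plan is to adapt the proof of Theorem~\ref{thm:uniqueness} to the unsigned setting, where the ambient alphabet is $\Omega = \{x_1,\ldots,x_n\}$ (no conjugate variables), the multiset $\mathcal{M}$ consists of the squarefree monomials $m(\mathbf{s},\mathbf{s'})$ from \eqref{eqn:unsigned_I}, and the ideal $I$ plays the role previously played by $I^{ext}$. With this dictionary, the Type~1 and Type~3a conditions translate verbatim.

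For Type~1, every generator in $\mathcal{M}$, and hence every generator of $I$ for any output assignment $T$, is a univariate monomial. Thus $I$ is generated by a subset of $\{x_1,\ldots,x_n\}$, so by the equivalence (c)$\Leftrightarrow$(d) of Theorem~\ref{tfae}, $I$ is a prime ideal. For Type~3a, the defining hypothesis is precisely that $I$ is prime for every output assignment $T$ (in the unsigned version of the definition). Either way, the primary decomposition of $I$ consists of a single component, which by the Stanley--Reisner correspondence established in \cite{Jarrah_reveng} corresponds to a single unsigned min-set of $V$.

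The only remaining point---which distinguishes the unsigned case from the signed one---is the strengthening of ``at most one'' to ``exactly one.'' In the signed setting, Proposition~\ref{var-and-its-conj-min-set} witnesses output assignments for which the signed model space is empty (a variable and its conjugate both appear as generators, collapsing the ideal to the whole ring). In the unsigned setting no such obstruction exists: $\mathrm{Mod}(\mathcal{D})$ is always nonempty (one can always interpolate through the data), so $V$ always has at least one feasible set and hence at least one unsigned min-set. Combined with the uniqueness argument, this yields exactly one.

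There is no substantive obstacle here; the corollary is essentially a translation of Theorem~\ref{thm:uniqueness} back to the unsigned world, where the absence of conjugate variables removes the only source of ambiguity between ``at most one'' and ``exactly one.'' The main bookkeeping task is simply verifying that the Type~1/Type~3a conditions, originally phrased in terms of $\mathcal{M}_\Omega$ and $I^{ext}$, descend cleanly to $\mathcal{M}$ and $I$ via the substitution $\overline{x_i} \mapsto x_i$.
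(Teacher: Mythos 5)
Your proof is correct and matches the paper's intent: the paper gives no separate argument for this corollary, describing it only as the specialization of Theorem~\ref{thm:uniqueness} to the setting where the ideal is already the squarefree monomial ideal $I$ and no conjugate variables appear, which is exactly what you carry out. Your additional observation that $\mathrm{Mod}(\mathcal{D})$ is always nonempty, upgrading ``at most one'' to ``exactly one,'' is a correct and worthwhile point that the paper leaves implicit.
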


\section{A combinatorial  approach to uniqueness}

In this section we study the problem of uniqueness from a combinatorial point of view. Namely, we present results of the relationship between uniqueness and the way the data is distributed in the hypercube $\F^n$.

\subsection{Necessary conditions for uniqueness}

We say that $V$ has a \textit{diagonal} if there is a point $p$ such that it differs from all other points  in $V$ in at least two entries. If $l$ is the maximum number such that $p$ differs from all other points in $V$ in at least $l$ entries, then $l$ is called the \textit{length} of the diagonal. Fig.~\ref{fig:diag} shows sets with a diagonal.

\begin{figure}[ht]\label{fig:diag}
\centering
    \includegraphics[width=0.4\textwidth]{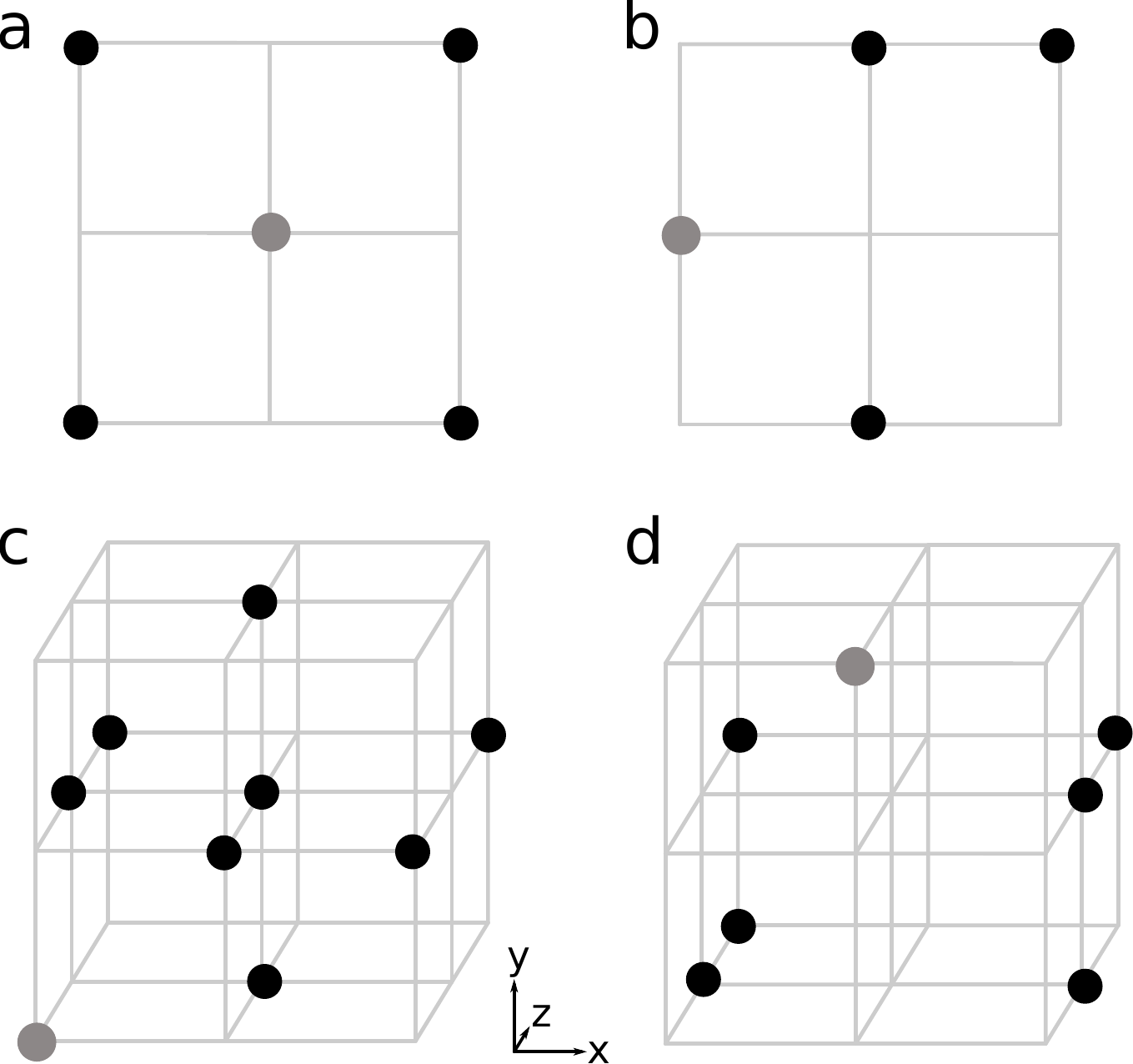}
    \caption{
    Examples of sets with diagonals. The gray points indicate a point that differs from all others in the largest number of entries. The length of the diagonal is 2 for (a), (b), (c), and it is 3 for (d). 
    }
\end{figure}

\begin{theorem}\label{thm:diag_unsigned}
If $V$ has a diagonal of length $l$, then there is an output assignment with at least $l$ unsigned min-sets.
\end{theorem}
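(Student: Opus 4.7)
The strategy is to choose a specific output assignment $T$ that makes the ideal $I$ of non-disposable sets have at least $l$ distinct minimal primes; via the Stanley-Reisner correspondence recalled earlier, these correspond bijectively to unsigned min-sets. Let $p \in V$ be the point witnessing the diagonal, and for each $q \in V \setminus \{p\}$ let $E(q) = \{k : p_k \neq q_k\}$, so $|E(q)| \geq l$. Assign $p$ the output $1$ and every other point of $V$ the output $0$. Then $I = \langle \prod_{k \in E(q)} x_k \mid q \in V \setminus \{p\} \rangle$, and the unsigned min-sets are exactly the minimal transversals (minimal hitting sets) of the hypergraph $\mathcal{H} = \{E(q)\}_{q \neq p}$.

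By the maximality in the definition of diagonal length, some $q_1 \in V \setminus \{p\}$ satisfies $|E(q_1)| = l$. For each $k \in E(q_1)$ I plan to build a minimal transversal $H_k$ of $\mathcal{H}$ with $H_k \cap E(q_1) = \{k\}$; since distinct $k$ yield distinct $H_k$, this produces the required $l$ min-sets. The construction is to delete the $l-1$ coordinates in $E(q_1) \setminus \{k\}$ from the ambient variable set and take $H_k$ to be any minimal transversal of the restricted hypergraph $\{E(q) \setminus (E(q_1) \setminus \{k\})\}_{q \neq p}$. The essential input is the inequality $|E(q)| \geq l$: since we delete only $l-1$ coordinates and each edge has at least $l$ of them, every restricted edge remains nonempty, so a minimal transversal exists; moreover the restricted edge coming from $q_1$ is exactly $\{k\}$, which forces $k \in H_k$, and since $H_k$ avoids $E(q_1) \setminus \{k\}$ by construction we get $H_k \cap E(q_1) = \{k\}$.

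The remaining task is to check that $H_k$ is still minimal when viewed as a transversal of the original (unrestricted) hypergraph $\mathcal{H}$. Removing $k$ uncovers $E(q_1)$ because $H_k \cap E(q_1) = \{k\}$. Removing any other $h \in H_k$ uncovers some $E(q) \in \mathcal{H}$: by minimality of $H_k$ in the restricted family there is a restricted edge hit by $H_k$ only at $h$, and because $H_k$ is disjoint from $E(q_1) \setminus \{k\}$, the same edge in the unrestricted family is still hit by $H_k$ only at $h$.

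I expect this minimality lift from the restricted to the unrestricted hypergraph to be the main obstacle: the necessity of each element of $H_k$ must be certified using minimality in a different set system, and the inequality $|E(q)| \geq l = |E(q_1)|$ is precisely what makes the two settings compatible (otherwise the restricted family could have empty edges and the construction would collapse). Everything else is routine bookkeeping with the Stanley-Reisner dictionary and the definition of the diagonal.
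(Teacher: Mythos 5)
Your proof is correct and follows essentially the same route as the paper: you choose the output assignment separating the diagonal point $p$ from all other inputs, observe that $I$ is then generated by squarefree monomials of degree at least $l$, and conclude that there are at least $l$ minimal primes (equivalently, minimal transversals). The only difference is that the paper simply asserts that an ideal generated by monomials with at least $l$ factors has at least $l$ primary components, whereas you supply the justification via the $H_k$ construction pinned to a minimum-size edge $E(q_1)$ with $\lvert E(q_1)\rvert = l$ --- a step worth making explicit, and your minimality-lifting argument for it is sound.
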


\begin{proof}
Let $p$ be the point that differs from all other points in at least two entries. Without loss of generality, suppose $p=\textbf{0}=(0,\ldots, 0)$. This is achieved by using bijective functions for each variable that map the nonzero entries of $p$ to 0 (each entry may have its own function).
 We consider the assignment $\textbf{0}\rightarrow 0$ and $s\rightarrow 1$ for all other $s\in V$.

Each $s\in V\setminus\{\textbf{0}\}$ differs from $\textbf{0}$ by at least $l$ entries, so $m(s,\textbf{0})$ is always a multivariate monomial with at least $l$ factors. Then, $I$ is generated only by multivariate monomials with at least $l$ factors and hence will have at least $l$ primary components. Therefore, there are at least $l$ unsigned min-sets.
\end{proof}

Theorem \ref{thm:diag_unsigned} guarantees that there are output assignments for the input sets in Fig.~\ref{fig:diag} that result in more than one unsigned min-set. This theorem is not valid in the signed case, as the next example shows.

\begin{example}\label{eg:diag}
Consider the input set $V=\{(0,0),(2,0),(0,2),(2,2),(1,1)\}\subset \F_3^2$ illustrated in Fig.~\ref{fig:diag}a. It has a diagonal of length 2 since the point $(1,1)$ differs from all others in two entries. Also, $(1,1)$ is the only such point. Since $V$ has a diagonal, by Theorem \ref{thm:diag_unsigned} there is an output assignment that results in multiple unsigned min-sets. However, by exhaustive analysis it can be shown that any output assignment results in at most one signed min-set.
\end{example}

If we restrict it to Boolean data, however, Theorem~\ref{thm:diag_unsigned} is valid in the signed case and stated as the next result.

\begin{theorem}
If $V\subset \{0,1\}^n$ has a diagonal of length $l$, then there is an ouput assignment with at least $l$ signed min-sets.
\end{theorem}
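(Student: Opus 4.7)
The plan is to mimic the proof of Theorem~\ref{thm:diag_unsigned}, exploiting the Boolean structure to keep the signs of the pseudomonomials uniform. Let $p \in V$ be a point witnessing the diagonal of length $l$, so $p$ differs from every other element of $V$ in at least $l$ coordinates. First I would apply the bijection $\sigma:\{0,1\}^n \to \{0,1\}^n$ that flips coordinate $k$ exactly when $p_k = 1$, so that $\sigma(p) = \mathbf{0}$. Because flipping a coordinate preserves unateness (it merely reverses the activator/inhibitor status of that variable), $\sigma$ induces a bijection between the signed min-sets of any output assignment on $V$ and those of the corresponding assignment on $\sigma(V)$; in particular it preserves their number. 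Hence, without loss of generality, $p = \mathbf{0}$.

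Next I would assign $\mathbf{0} \mapsto 0$ and $s \mapsto 1$ for every other $s \in V$. Pairs of distinct nonzero points share the output $1$ and contribute nothing, so the only generators of $I^{ext}$ come from pairs $(\mathbf{0}, s)$. In each such pair, $p_k = 0 < 1 = s_k$ in every differing coordinate, so
\[
m^{ext}(\mathbf{0}, s) \;=\; \prod_{k\,:\, s_k = 1} x_k,
\]
a squarefree monomial in the unbarred variables alone, of degree $\geq l$. Thus every minimal generator of $I^{ext}$ is a squarefree monomial in $\{x_1,\ldots,x_n\}$ of degree at least $l$. Fixing one such generator $m = x_{i_1} \cdots x_{i_d}$, for each $k \in \{1,\ldots,d\}$ minimality of $m$ gives $m/x_{i_k} \notin I^{ext}$; since $I^{ext}$ is radical, some minimal prime $P_k$ avoids $m/x_{i_k}$; and since $P_k \ni m$ is prime, $P_k$ must contain $x_{i_k}$ while missing every other $x_{i_j}$. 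The primes $P_1,\ldots,P_d$ are therefore pairwise distinct, producing at least $l$ minimal primes of $I^{ext}$, each generated purely by unbarred variables. Translating back via $x_k \mapsto x_k - 1$ turns each such prime $\langle x_{j_1},\ldots,x_{j_r}\rangle$ into the proper prime $\langle x_{j_1}-1,\ldots,x_{j_r}-1\rangle$ of $I^{sgn}$; since no $x_k+1$ factor ever appears, nothing collapses to the unit ideal, and one obtains at least $l$ distinct signed min-sets.

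The main obstacle is the initial reduction: one needs the Boolean hypothesis so that the coordinate-flip bijection is sign-compatible with unateness, which in turn forces the pseudomonomial signs to be uniformly of the form $x_k - 1$ in the resulting ideal. This is precisely the obstruction that defeats the analogue over $\F_3$ (cf. Example~\ref{eg:diag}), where the diagonal point can have neighbors in conflicting coordinate directions, introducing both $x_k - 1$ and $x_k + 1$ factors that can cancel. Once the reduction is in place, the counting of minimal primes is a standard squarefree-monomial argument essentially identical to that of Theorem~\ref{thm:diag_unsigned}.
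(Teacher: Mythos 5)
Your proof is correct and follows essentially the same route as the paper's: normalize so the diagonal point is $\mathbf{0}$, assign it output $0$ and all other inputs output $1$, observe that every generator is then a product of at least $l$ factors of the form $x_i-1$ (no $x_i+1$), and transfer the count of primary components to the associated squarefree monomial ideal. The only substantive difference is that you explicitly justify the step the paper asserts without proof --- that a squarefree monomial ideal whose minimal generators all have degree at least $l$ has at least $l$ minimal primes --- via the $m/x_{i_k}\notin I^{ext}$ argument, which is a welcome addition rather than a deviation.
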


\begin{proof}
By changing 0/1 to 1/0 as needed, without loss of generality we suppose $\textbf{0}=(0,\ldots, 0)$ is the point that differs from all other points in at least $l$ entries. Note that this change can only affect the sign of the min-sets, but not how many there are. We consider the assignment $\textbf{0}\rightarrow 0$ and $s\rightarrow 1$ for all other $s\in V$.

Since each $s\in V\setminus\{\textbf{0}\}$ differs from $\textbf{0}$ by at least $l$ entries, $m^{sgn}(s,\textbf{0})$ has at least $l$ factors of the form $x_i-1$ and no factor of the form $x_i+1$. Then, $I^{sgn}$ is generated only by polynomials of the form $\prod_{i\in M}  (x_i-1)$, where $|M|\geq l$. If the ideal $I^{sgn}$ had less than $l$ primary components, then it would  follow that the ideal $I^*$ obtained by making the replacement $x_i-1\rightarrow x_i$ would also have less than $l$ primary components. But $I^*$ is generated by multivariate monomials that have at least $l$ factors. This contradiction implies that $I^{sgn}$ has at least $l$ primary components and hence there are at least $l$ signed min-sets.
\end{proof}

There is a generalization of the previous theorem which is presented next, but it needs a stronger hypothesis than just having a diagonal.

\begin{theorem}\label{thm:diag_signed}
If $V$ has a diagonal of length $l$ and the corresponding point $p$ is a corner point, then there is an output assignment with at least $l$ signed min-sets.
\end{theorem}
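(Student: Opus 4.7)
The plan is to adapt the proof of the preceding Boolean-case theorem, with the corner-point hypothesis replacing what was automatic in $\{0,1\}^n$. The key observation is that if $p$ is a corner of $\F^n$, then each coordinate $p_i$ is either $\min\F$ or $\max\F$, so for every other $s\in V$ with $s_i\neq p_i$ the sign of $s_i-p_i$ depends only on $i$ (and not on $s$). Writing $\epsilon_i=+1$ when $p_i=\min\F$ and $\epsilon_i=-1$ when $p_i=\max\F$, this means no coordinate ever contributes both an $(x_i-1)$-factor and an $(x_i+1)$-factor across the pseudomonomials generated from pairs $(p,s)$.

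I would then take the output assignment $p\mapsto 0$ and $s\mapsto 1$ for all other $s\in V$, so that the only pseudomonomials entering $I^{sgn}$ are those of the form
\[
m^{sgn}(p,s)=\prod_{p_k\neq s_k}(x_k-\epsilon_k),
\]
each of which has at least $l$ linear factors by the diagonal-length hypothesis. Because $\epsilon_k$ is independent of $s$, the shift $y_k:=x_k-\epsilon_k$ is a ring isomorphism that carries $I^{sgn}$ to a squarefree monomial ideal $J\subseteq\F[y_1,\ldots,y_n]$ whose generators are squarefree monomials of degree at least $l$. Since ring isomorphisms preserve primary decomposition, it suffices to show $J$ has at least $l$ minimal primes.

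To close the argument I would pick any minimal generator $m=\prod_{k\in A}y_k$ of $J$ with $|A|\geq l$. Every minimal prime of $J$ is generated by variables and contains $m$, hence contains some $y_k$ with $k\in A$. For each $k\in A$, the monomial $m/y_k$ is not in $J$ by minimality of $m$, so some minimal prime $P_k$ avoids $m/y_k$; being prime and containing $m$, $P_k$ must then contain $y_k$ and miss every other $y_j$ with $j\in A$. The primes $\{P_k\}_{k\in A}$ are therefore pairwise distinct, yielding at least $l$ minimal primes of $J$ and hence at least $l$ signed min-sets.

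The main obstacle is really packaged into the corner hypothesis itself. Without it, different $s\in V$ could lie on opposite sides of $p$ in some coordinate, producing both $(x_k-1)$ and $(x_k+1)$ among the generators of $I^{sgn}$; by Proposition \ref{var-and-its-conj-min-set} this can collapse $I^{sgn}$ to the whole ring and eliminate all signed min-sets, which is precisely the pathology Example \ref{eg:diag} exhibits. Once corners are assumed, the substitution reduces the problem cleanly to a minimal-prime count in a squarefree monomial ideal, essentially the same count used in the Boolean proof.
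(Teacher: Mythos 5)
Your proof is correct and follows essentially the same route as the paper's: the same output assignment ($p\mapsto 0$, all other inputs $\mapsto 1$), the corner hypothesis used to make the sign of each coordinate's factor consistent across all pseudomonomials, and a translation reducing $I^{sgn}$ to a squarefree monomial ideal whose minimal primes are counted. The only differences are cosmetic --- the paper first normalizes $p$ to $\mathbf{0}$ via the monotone change of variables $z\mapsto M-z$ instead of carrying the signs $\epsilon_k$ explicitly, and you spell out the minimal-prime count (at least $l$ components when every generator has at least $l$ factors) that the paper only asserts.
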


\begin{proof}
Without loss of generality, suppose $\textbf{0}=(0,\ldots ,0)$ is the point that differs from all other points in at least 2 entries. This is achieved by using the decreasing function $\neg(z):=M-z$ to make change of variables as needed, where $M$ is the maximum value variables can take. Note that $\neg$ being monotone is key as it does not change the number of min-sets, but just their signs.

We consider the assignment $\textbf{0}\rightarrow 0$ and $s\rightarrow 1$ for all other $s\in V$. The rest of the proof is the same as the previous theorem.
\end{proof}

Example~\ref{eg:diag} shows that the condition of $p$ being a corner point cannot be omitted.

\begin{example}
Consider the input set $V=\{(0,0,0),(1,1,0),(0,1,1),(1,0,1)\}\subset \{0,1\}^3$ with outputs $T=\{0,0,0,1\}$. The length of the diagonal in $V$ is 2, and $I^{ext}=\< x_1 x_3,\overline{x_2} x_3,x_1 \overline{x_2} \>$ has primary decomposition $I^{ext}=\<x_1,\overline{x_2}\>\cap \<\overline{x_2},x_3\>\cap \<x_1,x_3\>$. 
Then, we have the following primary decompositions $I=\<x_1,x_2\>\cap \<x_2,x_3\>\cap \<x_1,x_3\>$ and $I^{sgn}=\<x_1-1,x_2+1\>\cap \<x_2+1,x_3-1\>\cap \<x_1-1,x_3-1\>$. So we have three unsigned and three signed min-sets. That is, in Theorems \ref{thm:diag_unsigned}-\ref{thm:diag_signed}, it is possible to have more min-sets than the length of the diagonal.
\end{example}
	
\subsection{Necessary and sufficient conditions for uniqueness}

We now state the definitions that will be needed for necessary and sufficient conditions for uniqueness of unsigned and signed min-sets.

\begin{definition}
    We say $C$ is a cylinder if $C=\{s:s_i=u_i \text{ for } i\in N\}$ for some $(u_i)_{i\in N}$ and $N\subset \{1,\ldots,n\}$. Such sets $C$ can be constructed using two  points $p$, $q$, and defining the cylinder by $\mathcal{C}(p,q)=\{s: s_i=p_i=q_i\text { for all $i$ such that $p_i=q_i$} \}$. That is, $\mathcal{C}(p,q)$ is the set of points where only the entries where $p$ and $q$ differ are allowed to vary. 
\end{definition}
Note that $p,q\in \mathcal{C}(p,q)$. Fig.~\ref{fig:setsH} illustrates some cylinders in $\{0,1,2\}^3$. 

\begin{figure}[ht]\label{fig:setsH}
\centering
    \includegraphics[width=0.5\textwidth]{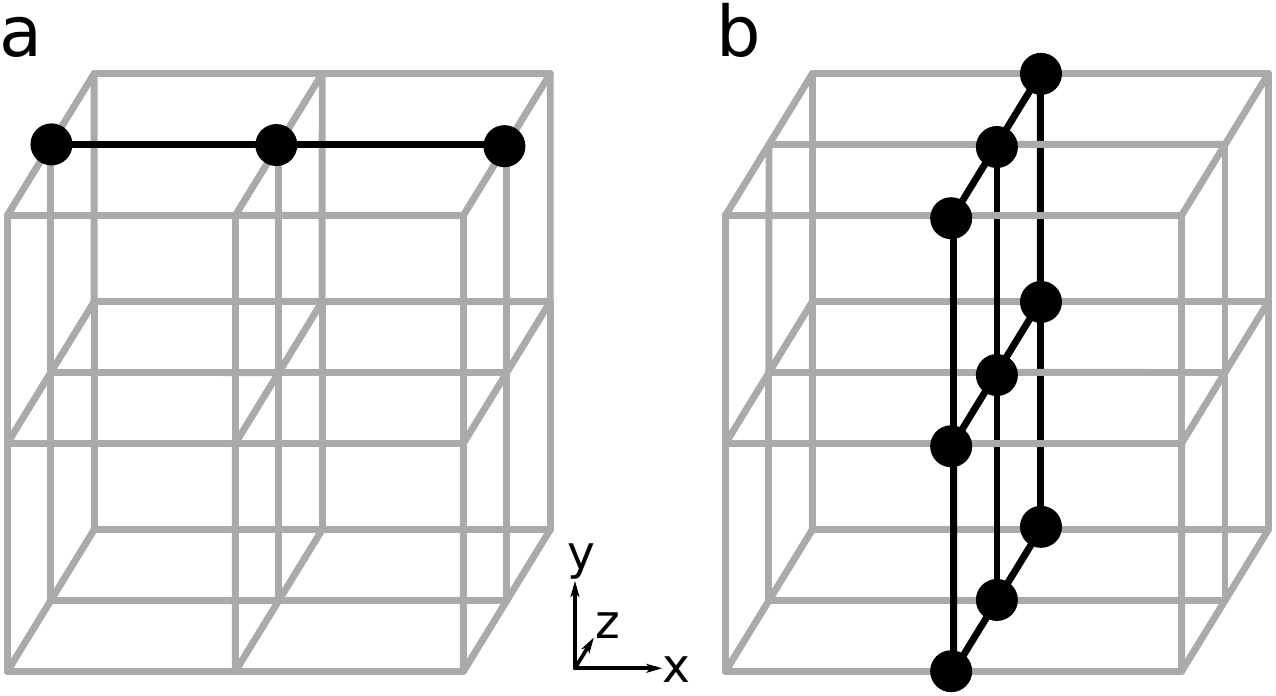}
    \caption{
    Examples of cylinders in $\{0,1,2\}^3$ (the inset with arrows shows the orientation of the $xyz$ space). (a) The cylinder $C=\{s:s_2=2,s_3=1\}$ (indicated by the points)  can also be seen as $\mathcal{C}(021,221)$, $\mathcal{C}(121,221)$, and $\mathcal{C}(021,121)$. (b)The cylinder $C=\{s:s_1=1\}$  can also be seen as $\mathcal{C}(100,122)$, $\mathcal{C}(120,111)$, among others.   Note that $\{0,1,2\}^3$ is also a cylinder. (Parentheses and commas are omitted in listing points in $\{0,1,2\}^3$ for ease of reading.)
    }
\end{figure}

\begin{definition}[Cylindrically Connected]
We say that a set is \emph{connected} if for every $p$ and $q$ in the set, there is a sequence $p=s_0,s_1,s_2,\ldots,s_{l-1},s_l=q$ in the set such that $d(p,s_1)=d(s_1,s_2)=\ldots=d(s_{l-1},q)=1$. A set with a single element is defined as connected.  
If $C\cap V$ is connected for any cylinder $C$ we say that $V$ is \emph{cylindrically connected}.
\end{definition}

\begin{example}
The set $V\subset \{0,1,2,3\}^2$ shown in Fig.~\ref{fig:con_disc_setsH_2d}a is connected, but the set in Fig.~\ref{fig:con_disc_setsH_2d}b is not. 
The subset of $\{0,1,2\}^3$ shown in Fig.~\ref{fig:con_disc_setsH_3d} is cylindrically connected and the subset shown in Fig.~\ref{fig:notcon_disc_setsH_3d} is not.
\end{example}

\begin{figure}[ht]\label{fig:con_disc_setsH_2d}
\centering
    \includegraphics[width=0.5\textwidth]{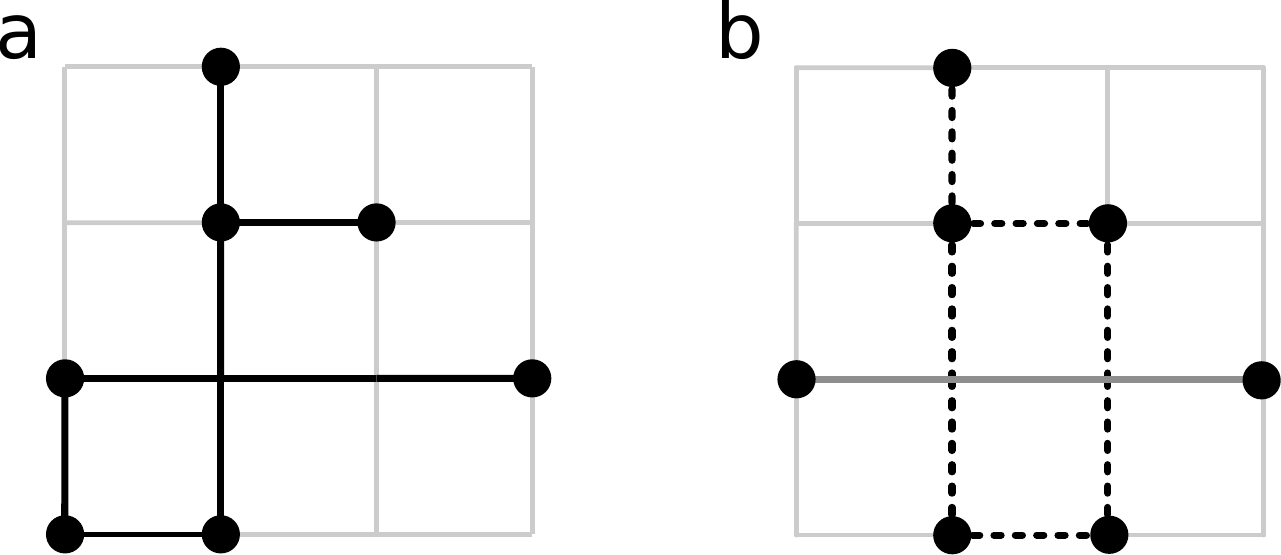}
    \caption{ (a) A connected set. Any point can be reached from another by a sequence such that consecutive points have distance 1. (b) A disconnected set. Some points cannot be reached from others. Points connected by dashed line can be reached from each other. Points connected by black line can also be reached from each other. However, points from the two different groups cannot be reached from each other.
    }
\end{figure}

\begin{figure}[ht]\label{fig:con_disc_setsH_3d}
\centering
    \includegraphics[width=0.5\textwidth]{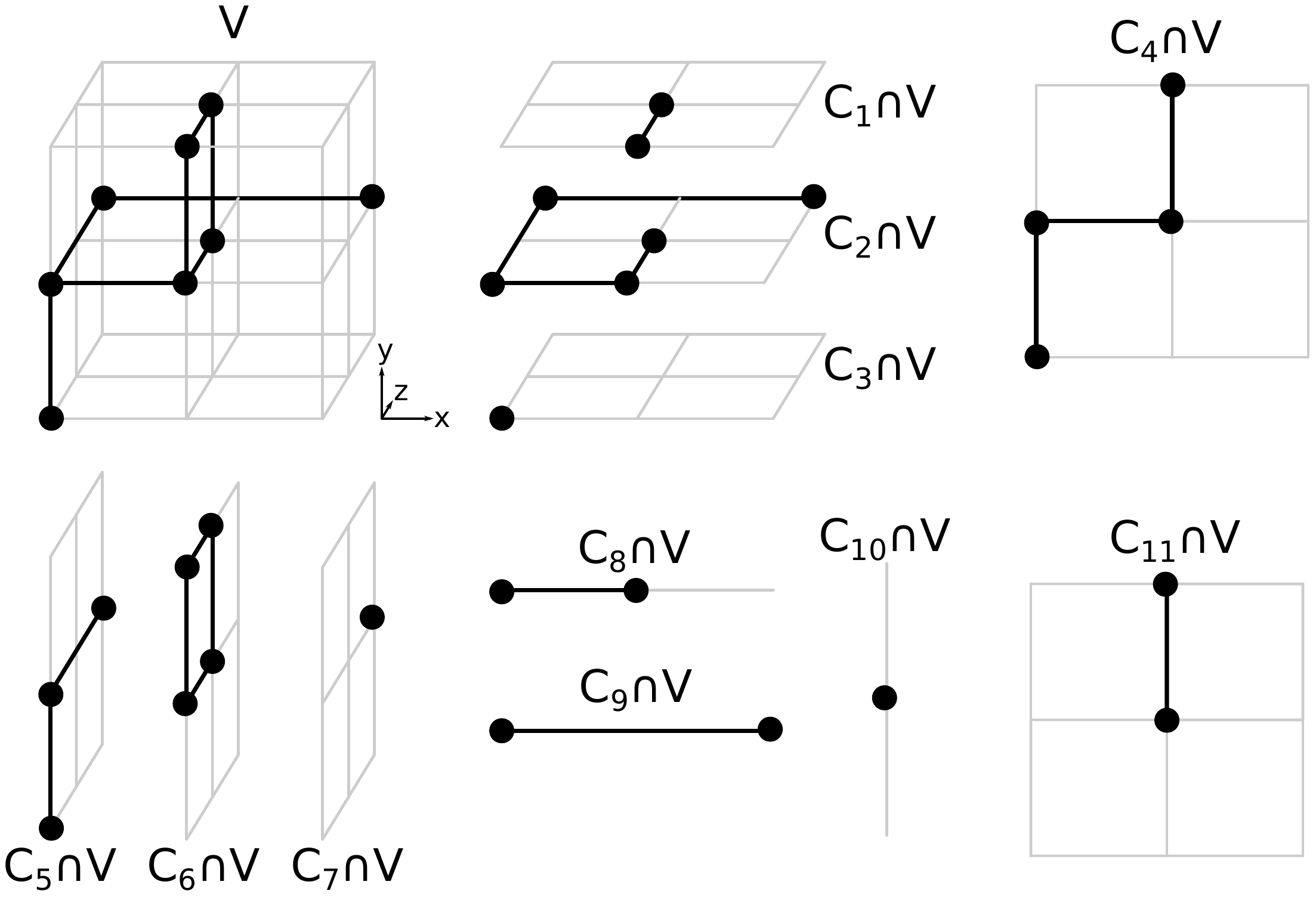}
    \caption{Example of an input set and some intersections of the form $C\cap V$. The black lines show that each of those intersections are connected. $C_1=\{s:s_2=2\}$,
    $C_2=\{s:s_2=1\}$,
    $C_3=\{s:s_2=0\}$,
    $C_4=\{s:s_3=0\}$
    $C_5=\{s:s_1=0\}$,
    $C_6=\{s:s_1=1\}$,
    $C_7=\{s:s_1=2\}$,
    $C_8=\{s:s_2=1,s_3=0\}$,
    $C_9=\{s:s_2=1,s_3=2\}$,
    $C_{10}=\{s:s_1=2,s_3=2\}$,
    $C_{11}=\{s:s_3=1\}$. For all other cylinders $C$, $C\cap V$ is connected.
    }
\end{figure}

\begin{theorem}\label{thm:uniqueness_unsigned_Boolean}
Consider $V$ to be an input set. The following are equivalent.

\begin{enumerate}
    \item For every output assignment, there is exactly one unsigned min-set.
    
    \item For every cylinder $C$, if $S  \subsetneq C\cap V$ is connected, then there is a connected set $S'$ such that $S\subsetneq S'\subset C\cap V$.
    
    \item $V$ is cylindrically connected.

\end{enumerate}
\end{theorem}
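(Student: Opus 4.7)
The plan is to prove the equivalence $(2)\Leftrightarrow(3)$ as a short graph-theoretic argument, then close the cycle via $(3)\Rightarrow(1)\Rightarrow(3)$ using characterization (e) of Theorem~\ref{tfae}, which says a squarefree monomial ideal is prime exactly when every multivariate generator is divisible by a univariate generator.

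For $(2)\Leftrightarrow(3)$: assuming $V$ is cylindrically connected, a proper connected $S\subsetneq C\cap V$ can be extended by following a unit-distance path inside $C\cap V$ from a point of $S$ to a point outside $S$ and adjoining the first vertex where the path leaves $S$; this gives a strictly larger connected subset. Conversely, if $C\cap V$ is disconnected, any connected component $S$ is a proper connected set that admits no connected enlargement inside $C\cap V$, because any point of $C\cap V$ adjacent to $S$ would already lie in the same component.

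For $(3)\Rightarrow(1)$: fix an arbitrary output assignment and a multivariate generator $m(s,s')$ of $I$ with $t(s)\neq t(s')$. Apply cylindrical connectedness to $C=\mathcal{C}(s,s')$: there is a unit-distance path $s=s_0,s_1,\ldots,s_k=s'$ inside $C\cap V$. Since the outputs change between the endpoints, some consecutive pair $(s_{j-1},s_j)$ has differing outputs, yielding a univariate generator $x_i$ of $I$. Because the entire path lies in $\mathcal{C}(s,s')$, the differing coordinate $i$ must be one on which $s$ and $s'$ disagree, so $x_i$ divides $m(s,s')$. Hence $I$ is prime and has a unique minimal prime, giving a unique unsigned min-set.

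For $(1)\Rightarrow(3)$ (contrapositive): suppose some cylinder $C=\{s:s_i=u_i,\ i\in N\}$ has $C\cap V$ disconnected, pick a connected component $A$ and let $B=(C\cap V)\setminus A$. Define the output assignment $t\equiv 0$ on $A\cup(V\setminus C)$ and $t\equiv 1$ on $B$. The generators of $I$ then split into two types: (i) pairs $(r,r')$ with $r\notin C$ and $r'\in B$, whose monomial must involve some variable $x_i$ with $i\in N$ (since $r$ disagrees with the cylinder values on at least one $N$-coordinate); and (ii) pairs $(s,s')$ with $s\in A$ and $s'\in B$, whose monomial uses only variables outside $N$ (both points lie in $C$). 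Because $A$ and $B$ are separate components, any such $(s,s')$ has Hamming distance $\geq 2$; choose $(s,s')$ minimizing this distance. Then $m(s,s')$ is multivariate, and no generator divides it: type (i) generators contain a variable in $N$ absent from $m(s,s')$, while type (ii) generators have support of size $\geq d(s,s')$ by minimality and hence cannot be strict divisors. So $m(s,s')$ is a multivariate minimal generator of $I$, forcing $I$ to be non-prime and producing multiple min-sets.

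The main obstacle is the bookkeeping in $(1)\Rightarrow(3)$, specifically verifying minimality of $m(s,s')$ in $I$. The argument rests on the clean separation between variables in $N$ (which appear in every generator crossing the boundary of $C$) and variables outside $N$ (the only ones in generators internal to $C\cap V$); the output assignment is engineered precisely so that every relevant generator falls into one of these two types, making it impossible for any other generator to absorb $m(s,s')$.
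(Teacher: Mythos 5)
Your proof is correct and takes essentially the same approach as the paper: both hinge on Theorem~\ref{tfae}(e) (primality of a squarefree monomial ideal via univariate divisors of its generators), both obtain $(3)\Rightarrow(1)$ from a unit-distance path in $\mathcal{C}(p,q)\cap V$ whose outputs must switch at some consecutive pair, and both handle the converse by engineering a $0/1$ output assignment that separates a connected piece of $C\cap V$ from the rest and noting that any generator crossing the cylinder boundary must contain a variable indexed by $N$. The only organizational difference is that you close the cycle with a direct contrapositive of $(1)\Rightarrow(3)$, exhibiting a multivariate minimal generator of $I$, whereas the paper proves $(1)\Rightarrow(2)$ by using primality to extract a univariate divisor $m(s,r)$ and adjoin $r$ to $S$; these are mirror images of the same argument.
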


\begin{proof}
(1 $\implies $ 2)
Let $C=\{s:s_i=u_i \text{ for } i\in N\}$ be a cylinder and suppose $S\subsetneq C\cap V$ is connected and let $q\in C\cap V, q\notin S$. Consider the following output assignment: $s\rightarrow 0$ for $s\in S$ and $s\rightarrow 1$ for all other $s\in V$. Note that the output assignment for  $q$ is $1$ (since $q\notin S$). Also consider $p\in S$; it will have an output assignment of 0. Then $m(p,q)$ is a monomial in $I$.

Since there is a unique min-set, $I$ is primary and $m(p,q)$ must have a univariate divisor, say, $m(s,r)$ with output assignments $s\rightarrow 0$ and $r\rightarrow 1$, where $s\in S$, and $r\in V\setminus S$.

Note that $r\in C\cap V$. Indeed, since $s,p,q\in C$, $s_i=p_i=q_i$ for $i\in N$, then, for $i\in N$, $m(p,q)$ does not have any factor $x_i$. Since  $m(s,r)$ divides $m(p,q)$, $m(s,r)$ cannot be $x_i$ for $i\in N$, so $r_i=s_i=p_i=q_i$ for $i\in N$. This means that $r\in C$. Then, $S':=S\cup \{r\}$ satisfies $S\subsetneq S'\subset C\cap V$. 

It remains to show that $S'=S\cup \{r\}$ is connected. This follows from the fact that $S$ is connected, $s\in S$, and $d(s,r)=1$ (since $m(s,r)$ is univariate). 

(2 $\implies$ 3) 
Suppose $p,q\in C\cap V$. If $d(p,q)\leq 1$, then the proof follows. If $d(p,q)\geq 2$, starting with the connected set $S=\{p\}$, one can use (2) inductively to construct a bigger connected set $S'$ that eventually becomes all of $C\cap V$. So $C\cap V$ will be connected. Note that this is the same proof used in Theorem \ref{thm:uniqueness_unsigned_Boolean} for this part of the theorem.

(3 $\implies$ 1)
Consider a fixed output assignment. We will prove that every multivariate monomial in $I$ has a univariate divisor in $I$. This will imply that $I$ is primary and hence there is a unique min-set.

Suppose $m(p,q)\in I$ has more than two factors. Then, $p,q\in V$ and the output assignments for $p$ and $q$ are different. 

Since $\mathcal{C}(p,q)\cap V$ is connected, there is a sequence $p=s_0,s_1,\ldots,s_l=q\in \mathcal{C}(p,q)\cap V$ such that $d(p,s_1)=d(s_1,s_2)=\ldots=d(s_{l-1},q)=1$. Since the outputs of $p$ and $q$ are different, there are two consecutive elements of the sequence such that their outputs are different, say $s_k$ and $s_{k+1}$. Then, $m(s_k,s_{k+1})$ is a monomial (of degree 1, since $d(s_k,s_{k+1})=1$) in $I$ that divides $m(p,q)$ (since $s_k,s_{k+1}\in \mathcal{C}(p,q)$).

\end{proof}

\begin{figure}[ht]\label{fig:notcon_disc_setsH_3d}
\centering
    \includegraphics[width=0.4\textwidth]{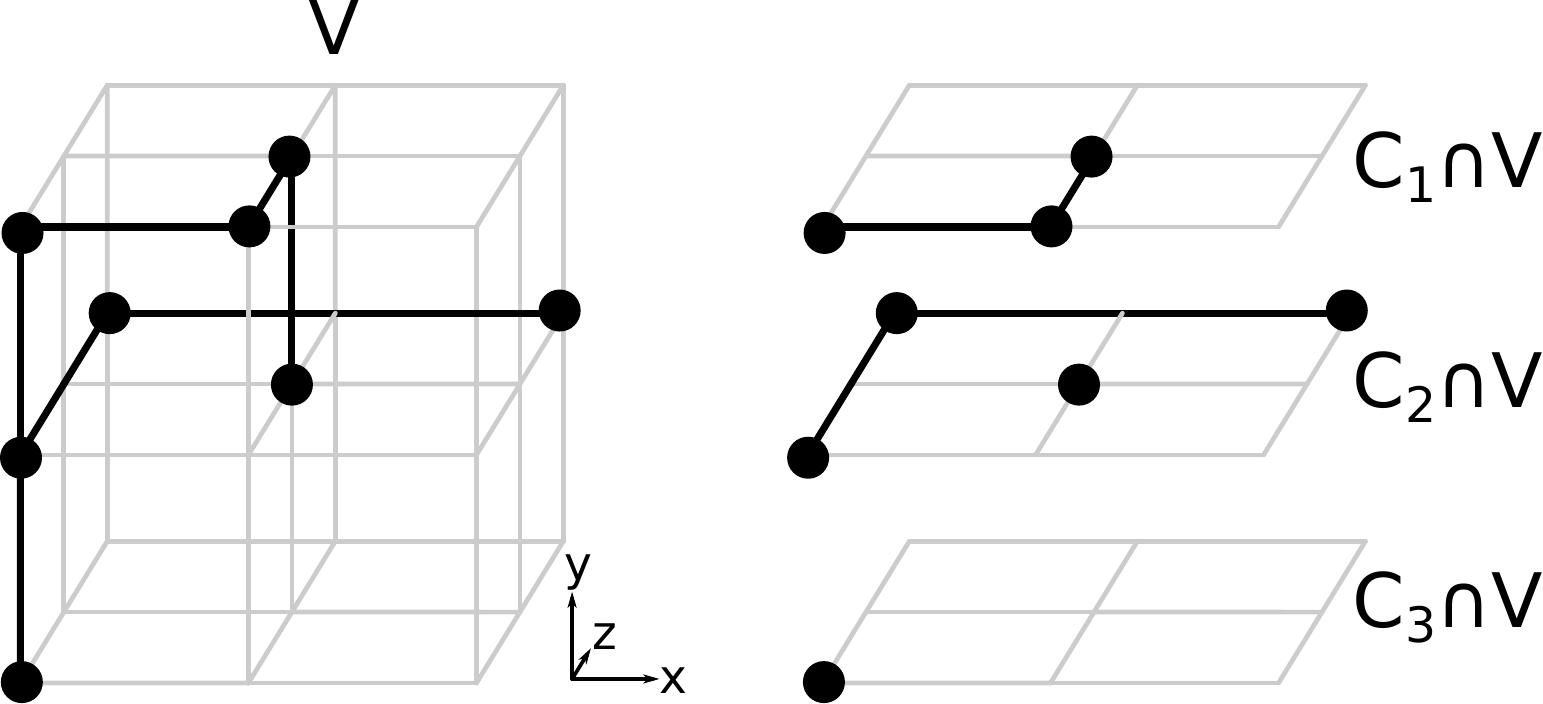}
    \caption{Example of an input set that is not cylindrically connected and some intersections of the form $C\cap V$. The black lines highlight which points are connected.  $C_1=\{s:s_2=2\}$,
    $C_2=\{s:s_2=1\}$,
    $C_3=\{s:s_2=0\}$. Note that $C_2\cap V$ is not connected, since the middle point cannot be reached from the others. Then, $V$ is not cylindrically connected.
    }
\end{figure}

\begin{example}\label{eg:1not2}
Consider the input set $V=\{(0,0),(2,0),(0,2),(2,2),(1,1)\}\subset\F_3^2$. $V$ is not connected, but  by exhaustive analysis it can be shown that any output assignment results in at most one signed min-set. Thus, (1) does not imply (2) in Theorem \ref{thm:uniqueness_unsigned_Boolean} if we consider signed min-sets.
\end{example}

\begin{example}\label{eg:2not1}
Consider the input set $V=\{(0,0,0),(0,2,0),(2,2,0),(2,2,1),(2,1,1)\}\subset\F_3^3$ and output assignment $(2,1,1)\rightarrow 0$ and $s\rightarrow 1$ for all other $s\in V$. $V$ satisfies condition (2) in Theorem \ref{thm:uniqueness_unsigned_Boolean}, but there are two signed min-sets. Thus, (2) does not imply (1) in Theorem \ref{thm:uniqueness_unsigned_Boolean} if we consider signed min-sets.
\end{example}

If the data is Boolean, however, Theorem \ref{thm:uniqueness_signed_Boolean} is valid for signed min-sets as well. 

Before we continue to signed min-sets, we note that there are other results which guarantee a unique unsigned min-set. For example, in~\cite{dimitrova-arxiv-22}, we showed that if the vanishing ideal $I(V)$ has a unique reduced Gr\"obner basis, then $V$ will correspond to a unique unsigned min-set for any output assignment; and in earlier work~\cite{he-unique-gbs} we provided a sufficient condition on $V$ for $I(V)$ to have a unique reduced Gr\"obner basis.

\begin{theorem}\label{thm:uniqueness_signed_Boolean}
Consider $V$ to be a Boolean input set. The following are equivalent.

\begin{enumerate}
    \item For every output assignment, there is at most one signed min-set.
    
    \item For every cylinder $C$, if $S  \subsetneq C\cap V$ is connected, then there is a connected set $S'$ such that $S\subsetneq S'\subset C\cap V$.
    
    \item $V$ is cylindrically connected. 
\end{enumerate}
If any of these is true and the data comes from a unate function, then there exists a unique signed min-set.

\end{theorem}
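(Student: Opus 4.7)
I plan to follow the three-implication structure of Theorem \ref{thm:uniqueness_unsigned_Boolean}, exploiting that (2) and (3) are sign-free so their equivalence proceeds by exactly the same combinatorial argument as before. The new content is linking the cylindrical hypothesis to the signed uniqueness condition (1), which I handle by working at the level of $I^{ext}$ and descending to $I^{sgn}$ via Theorem \ref{thm:unified}. The final clause is immediate: ``data coming from a unate function'' means $\textrm{Mod}^{sgn}(\mathcal{D})\neq\emptyset$, which by Proposition \ref{var-and-its-conj-min-set} excludes $I^{sgn}=R$ and upgrades ``at most one'' to ``exactly one.''

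For (3) $\Rightarrow$ (1), the plan is to show every multivariate generator of $I^{ext}$ has a univariate divisor that is itself a generator of $I^{ext}$; by Theorem \ref{tfae}(e) this makes $I^{ext}$ prime, and after the substitutions $x_j\mapsto x_j-1,\ \overline{x_j}\mapsto x_j+1$ the ideal $I^{sgn}$ is either prime or the whole ring, i.e., at most one signed min-set. I will do this by strong induction on the Hamming distance $d := d(p,q)$ of a generator $m^{ext}(p,q)$ with $t_p<t_q$. The base case $d=1$ is trivial. For $d\geq 2$, cylindrical connectedness supplies a neighbor $s_1\in \mathcal{C}(p,q)\cap V$ of $p$, differing from $p$ at a single coordinate $i\in\{j:p_j\neq q_j\}$. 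The critical Boolean fact is that $(s_1)_i=q_i$, so any univariate monomial built from $\{p,s_1\}$ carries at coordinate $i$ the same sign as the corresponding factor of $m^{ext}(p,q)$. A case split on $t_{s_1}$ finishes the step: if $t_p<t_{s_1}$, then $m^{ext}(p,s_1)\in I^{ext}$ is the desired univariate divisor; if $t_{s_1}\leq t_p<t_q$, then $m^{ext}(s_1,q)\in I^{ext}$ is a generator of Hamming distance $d-1$, and its inductively guaranteed univariate divisor also divides $m^{ext}(p,q)$ because $s_1$ and $p$ agree off coordinate $i$, making the factors of $m^{ext}(s_1,q)$ match those of $m^{ext}(p,q)$ at the relevant coordinates.

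For (1) $\Rightarrow$ (2), I adapt the unsigned argument using the pseudomonomial analog of Theorem \ref{tfae}(e): a prime pseudomonomial ideal is generated by a subset of $\{x_k\pm 1\}_k$, and every pseudomonomial in it must have one of those linear generators among its factors. Given a connected $S\subsetneq C\cap V$, assign output $0$ to $S$ and $1$ to the complement; (1) then forces $I^{sgn}$ to be either prime or equal to $R$. In the whole-ring case, Proposition \ref{var-and-its-conj-min-set} supplies a coordinate $k$ with both $x_k-1$ and $x_k+1$ in $I^{sgn}$, hence Boolean inputs $a\in S,\ b\in V\setminus S$ at Hamming distance $1$ at coordinate $k$; since $p,q\in C$ differ at $k$, the coordinate is free in $C$, so $b\in C\cap V\setminus S$ and $S\cup\{b\}$ extends $S$. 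In the prime case, choose any $m^{sgn}(p,q)$ with $p\in S$ and $q\in (C\cap V)\setminus S$; one of the generators $x_{k_j}-\epsilon_j$ of $I^{sgn}$ divides $m^{sgn}(p,q)$ as a factor, and the pair of inputs that produced it plays the role of $a,b$, extending $S$ by the same free-coordinate observation.

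The implication (2) $\Rightarrow$ (3) is verbatim the corresponding step in Theorem \ref{thm:uniqueness_unsigned_Boolean}, iteratively enlarging a singleton inside $\mathcal{C}(p,q)\cap V$ until it exhausts the intersection. The main obstacle is the sign bookkeeping in (3) $\Rightarrow$ (1): the identity $(s_1)_i=q_i$, which aligns the sign of the one-step change with the factor of $m^{ext}(p,q)$ at coordinate $i$, is a genuinely Boolean phenomenon; in the general finite-field setting the analogous neighbor may take an intermediate value, and Examples \ref{eg:1not2} and \ref{eg:2not1} show the signed analog of the theorem indeed fails there.
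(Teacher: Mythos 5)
Your implication (3) $\Rightarrow$ (1) is sound: the strong induction on Hamming distance carried out inside $I^{ext}$, using the Boolean identity $(s_1)_i=q_i$ and Theorem \ref{tfae}(e), is a valid (and slightly different) route to the paper's own argument, which instead works in $I^{sgn}$ after a change of variables and uses a minimal-degree-divisor argument along the connecting path. Likewise (2) $\Rightarrow$ (3) and the final clause are fine. The genuine gap is in (1) $\Rightarrow$ (2), and it is the same gap in both of your cases: you repeatedly pass from ``a linear polynomial $x_k-\epsilon$ lies in $I^{sgn}$'' (or is a generator of $I^{sgn}$ \emph{qua} prime ideal) to ``$x_k-\epsilon$ equals $m^{sgn}(a,b)$ for an actual data pair $a\in S$, $b\in V\setminus S$.'' That is, you conflate ideal membership with divisibility by one of the pseudomonomial generators coming from the data. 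For monomial ideals this inference is exactly Theorem \ref{tfae}(e); for pseudomonomial ideals with mixed signs it is false. Concretely, with $a=(0,0)$, $b=(1,1)$, $a'=(1,0)$, $b'=(0,1)$ one gets $m^{sgn}(a,b)=(x_1-1)(x_2-1)$ and $m^{sgn}(a',b')=(x_1+1)(x_2-1)$, and since their difference is $2(x_2-1)$,
\[
\langle (x_1-1)(x_2-1),\,(x_1+1)(x_2-1)\rangle=\langle x_2-1\rangle,
\]
a prime ideal whose linear generator $x_2-1$ is not produced by either input pair (both pairs have Hamming distance $2$). So in your ``prime case'' the linear form $x_{k_j}-\epsilon_j$ need not come from any pair of inputs, and in your ``whole-ring case'' the step from $x_k-1,x_k+1\in I^{sgn}$ to Boolean inputs at Hamming distance $1$ fails for the same reason (note also that $I^{sgn}=R$ contains \emph{every} polynomial, so Proposition \ref{var-and-its-conj-min-set} tells you nothing about which elements arise as generators). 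Whether the special structure of your assignment (all of $S\to 0$, with $S$ connected) excludes such configurations is essentially the combinatorial statement you are trying to prove, so it cannot be assumed.

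The paper closes this hole by a different choice of output assignment: it sends only a \emph{single} point $p\in S$ (chosen closest to $q$) to $0$ and everything else, including the rest of $S$, to $1$. After flipping coordinates so that $p=\mathbf{0}$, every generator of $I^{sgn}$ is of the form $\prod_i(x_i-1)$ with no $(x_i+1)$ factors. Two things follow at once: $I^{sgn}$ is automatically proper (all generators vanish at $(1,\ldots,1)$), so the whole-ring case never arises; and the translation $y_i=x_i-1$ turns $I^{sgn}$ into an honest monomial ideal, where membership of a univariate element does force a univariate element among the generators, i.e., an actual data point $r$ at Hamming distance $1$ from $p$. The minimality of $d(p,q)$ over $S$ then guarantees $r\notin S$, and the free-coordinate observation puts $r$ in $C$. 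If you replace your all-of-$S$ assignment by this single-point assignment, the rest of your argument goes through.
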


\begin{proof}

(1 $\implies $ 2)
Let $C=\{s:s_i=u_i \text{ for } i\in N\}$ be a cylinder and suppose $S\subsetneq C\cap V$ is connected and let $q\in C\cap V, q\notin S$. Consider $p\in S$ such that $d(p,q)=\min\{d(s,q),s\in S\}$, the closest point to $q$ in $S$. Consider the following output assignment: $p\rightarrow 0$ and $s\rightarrow 1$ for all other $s\in V$. Note that the output assignment for  $q$ is $1$. Now, by making the change of variables 0/1 to 1/0 as needed, we can assume that $p=\textbf{0}$. 
Note that this change of variables does not change the number of min-sets, but only their signs.

Since $I^{sgn}=\< m^{sgn}(\textbf{0},s): s\in V\setminus \{\textbf{0}\}\>$, all generators of $I^{sgn}$ only have factors of the form $x_i-1$. Because of this and since $I^{sgn}$ is prime, any generator of $I^{sgn}$ must have a univariate divisor that is also a generator. 

Since $m^{sgn}(p,q)=m^{sgn}(\textbf{0},q)=\prod_{q_i\neq 0}(x_i-1)=\prod_{q_i=1}(x_i-1)$ is a generator in $I^{sgn}$, there must be another univariate generator, $m^{sgn}(\textbf{0},r)=x_{i_0}-1$ that divides $m^{sgn}(\textbf{0},q)$. Note that this means that $r\in V$ and that $d(\textbf{0},r)=1$ and $d(r,q)=d(\textbf{0},q)-1$.

Note that $r\in C\cap V$. Indeed, since $p=\textbf{0},q\in C$, $p_i=0=q_i$ for $i\in N$, then, for $i\in N$, $m(\textbf{0},q)=\prod_{q_i=1}(x_i-1)$ does not have any factor of the form $x_i-1$. Since  $m(\textbf{0},r)$ divides $m(\textbf{0},q)$, $m(\textbf{0},r)$ cannot be $x_i-1$ for $i\in N$, so $r_i=p_i=0=q_i$ for $i\in N$. This means that $r\in C$. We also claim that $r\notin S$. This follows from the fact that $d(r,q)<d(\textbf{0},q)$ and $d(p,q)=d(\textbf{0},q)$ is minimal.

Then, $S':=S\cup \{r\}$ satisfies $S\subsetneq S'\subset C\cap V$. It remains to show that $S'=S\cup \{r\}$ is connected. This follows from the fact that $S$ is connected, $p=\textbf{0}\in S$, and $d(\textbf{0},r)=1$.

(2 $\implies$ 3) Suppose $p,q\in C\cap V$. If $d(p,q)\leq 1$, then the proof follows. If $d(p,q)\geq 2$, starting with the connected set $S=\{p\}$, one can use (2) inductively to construct a bigger connected set $S'$ that eventually becomes all of $C\cap V$. So $C\cap V$ will be connected. 

(3 $\implies$ 1)
Consider a fixed output assignment. We will prove that any multivariate generator in $I$ has a univariate divisor in $I$. That will imply that $I$ is primary and hence there is a unique min-set. 

Consider any generator of $I^{sgn}$ and let $m^{sgn}(p,q)$ be a divisor of minimal degree that is one of the generators of $I^{sgn}$. We will prove that $m^{sgn}(p,q)$ is univariate. 

First, note that $p,q\in V$ and the output assignments for $p$ and $q$ are different. Now, by making the change of variables 0/1 to 1/0 as needed, we can assume that $p=\textbf{0}$ and that the output corresponding to $p$ is 0. 
Note that this change of variables does not change the number of min-sets, but only their signs. Then, $m^{sgn}(p,q)$ becomes $m^{sgn}(\textbf{0},q)=\prod_{q_i=1}(x_i-1)$.

Since $\mathcal{C}(\textbf{0},q)\cap V$ is connected, there is a sequence $\textbf{0}=s_0,s_1,\ldots,s_l=q\in \mathcal{C}(\textbf{0},q)\cap V$ such that $d(\textbf{0},s_1)=d(s_1,s_2)=\ldots=d(s_{l-1},q)=1$. Since the outputs of $\textbf{0}$ and $q$ are different, we can pick the first element of the sequence with output not equal to 0, say $s_k$. Then, 
$m^{sgn}(\textbf{0},s_k)$ and $m^{sgn}(s_{k-1},q)$ are generators of $I^{sgn}$ that divide $m^{sgn}(\textbf{0},q)$. Since $m^{sgn}(\textbf{0},q)$ is of minimal degree, it follows that $s_{k-1}=\textbf{0}$ and $s_k=q$. Then, the sequence is just $\textbf{0}=s_0,s_1=q$ and $d(\textbf{0},q)=1$. This means that $m^{sgn}(p,q)$ is univariate. 

\end{proof}

\subsection{Design of experiments}
Here we show how our results can guide the design of experiments process when the goal is to obtain a unique wiring diagram starting from existing data. 

\begin{example}\label{eg:exp_design_cube}
Consider the unate Boolean function $f:\{0,1\}^3\rightarrow \{0,1\}$ given by $f(x)=x_1 \vee \overline{x_3}$ that will be used to generate data only. Consider the input set   $V=\{000,100,101,011\}$ and suppose we are interested in signed min-sets. In this case the data set is 
$$
\mathcal{D}=\{ (000,1), (100,1), (101,1), (011,0) \},
$$ 
illustrated in Fig.~\ref{fig:exp_design_cube}a (parentheses and commas are omitted from the elements of $\{0,1\}^3$). In this case $I^{sgn}=\<x_2+1\>\cap \<x_1-1,x_3+1\>$ and the signed  min-sets are $\{\overline{x_2}\}$ and $\{x_1,\overline{x_3}\}$. $V$ is not cylindrically connected (e.g. $V\cap\{s:s_1=0\}$ is not connected), so uniqueness of min-sets is not guaranteed. If we want to choose which additional experiment to add to get a unique signed min-set, we have four possible ways to extend $V$ shown in Fig.~\ref{fig:exp_design_cube}b. Only the last extension results in a set that is cylindrically connected, so we pick the experiment involving $001$ ( $f(001)=0$ in this example ) to obtain uniqueness regardless of what the function $f$ is (which is not known \textit{a priori} in practice). Indeed, adding the data point $(001,0)$ to $\mathcal{D}$  we obtain the unique signed min-set $\{x_1,\overline{x_3} \}$. The other extensions are not in general guaranteed to result in a unique signed min-set. With the particular example of $f$ that we have, all of these extensions result in more than one signed min-set.
\end{example}

\begin{figure}[ht]\label{fig:exp_design_cube}
\centering
    \includegraphics[width=0.3\textwidth]{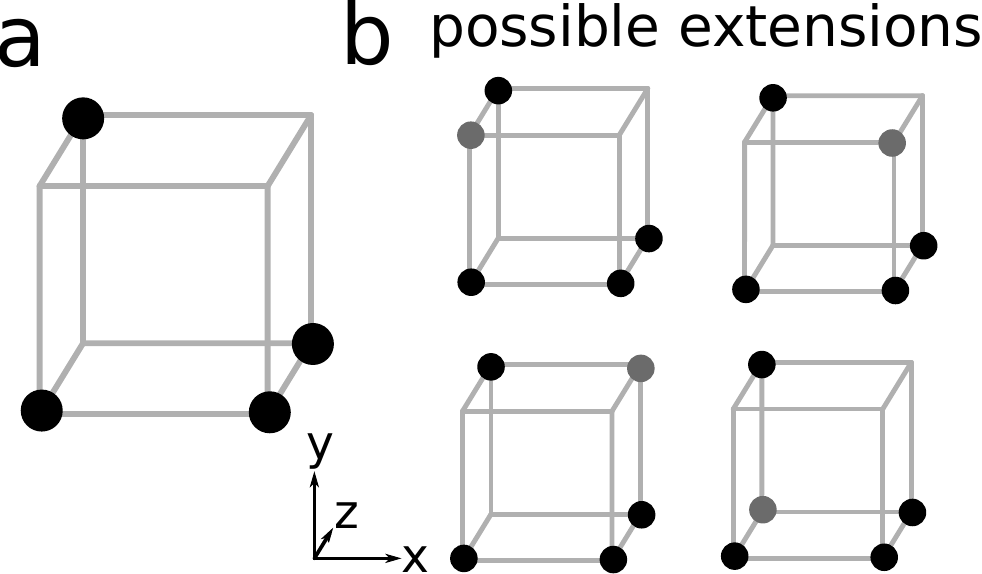}
    \caption{Input set in Example~\ref{eg:exp_design_cube}.
    }
\end{figure}

\begin{example}\label{eg:exp_design_plane}
Consider the function $f:\{0,1,2\}^2\rightarrow \{0,1,2\}$ given by $f(x)=x_1^2+x_1\mod 3$ that will be used to generate data only. Consider the input set   $V=\{00,20,12\}\subset \{0,1,2\}^2$ (parentheses and commas are omitted from the elements of $\{0,1,2\}^2$) and suppose we are interested in unsigned min-sets. In this case the data set is $\mathcal{D}=\{(00,0), (20,0), (12,2)\}$, Fig.~\ref{fig:exp_design_plane}a. Here, $I^{sgn}=\<x_1\>\cap \<x_2\>$ and the unsigned  min-sets are $\{x_1\}$ and $\{x_2\}$. $V$ is not cylindrically connected ($V\cap\{0,1,2\}^2=V$ is not connected), so uniqueness of min-sets is not guaranteed. If we want to choose which additional experiments to add to get a unique signed min-set, we have six possible ways to extend $V$, Fig.~\ref{fig:exp_design_plane}b. Only the first three extensions result in a set that is cylindrically connected, thus we pick the experiment involving $02$, $22$, or $10$ to obtain uniqueness regardless of what the function $f$ is (which is not known \textit{a priori} in practice). Those three extensions result in the unique unsigned min-set $\{x_1\}$. The last three extensions are not in general guaranteed to result in a unique unsigned min-set. With this particular example of $f$, all of these extensions result in two unsigned min-sets.
\end{example}

\begin{figure}[ht]\label{fig:exp_design_plane}
\centering
    \includegraphics[width=0.4\textwidth]{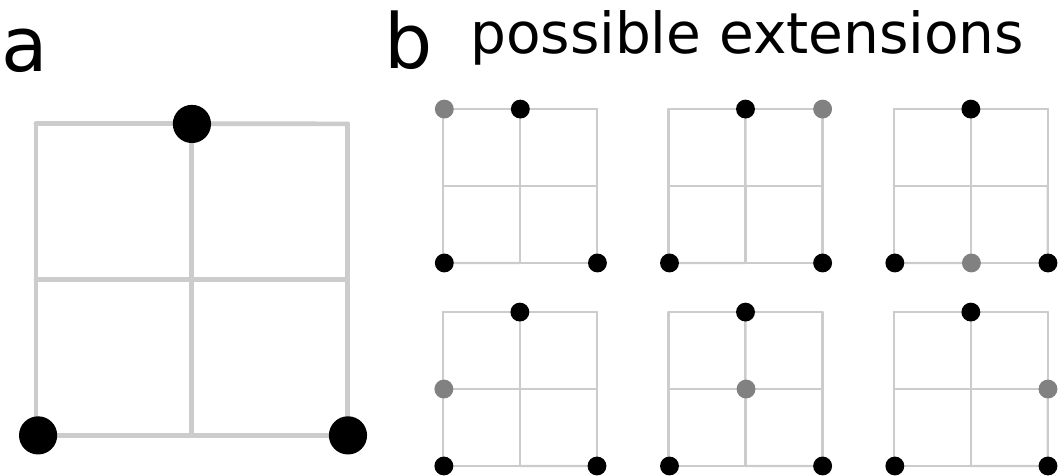}
    \caption{Input set in Example~\ref{eg:exp_design_plane}.
    }
\end{figure}

\section{Discussion/Conclusion}

Using algebraic models for gene regulatory networks and other biological systems has proven advantageous in particular since it allows to generate all minimal wiring diagrams consistent with the data. However, the number of minimal wiring diagrams (equivalently, min-sets) can be very large and a necessary and sufficient condition on the input data for the uniqueness of the min-set was unknown until now. We studied this problem from an algebraic and combinatorial points of view. The algebraic approach provides a unified framework to study signed and unsigned min-sets simultaneously. We used this to give a sufficient condition for uniqueness of signed and unsigned min-sets. The combinatorial approach resulted in necessary and sufficient conditions for uniqueness of unsigned min-sets and for Boolean signed min-sets. While uniqueness of signed min-sets for non Boolean data remains an open problem, the results introduced in this manuscript advance the study of algebraic design of experiments and enable more efficient data and model selection.

\bibliographystyle{siamplain}
\bibliography{references}

\end{document}